\theoremstyle{plain}
\newtheorem{theorem}{Theorem}%[section]
\newtheorem{proposition}[theorem]{Proposition}
\newtheorem{lemma}[theorem]{Lemma}
\theoremstyle{definition}
\newtheorem{example}{Example}
\newtheorem{definition}{Definition}
\theoremstyle{remark}
\newtheorem*{remark}{Remark}
\newcommand{\ignore}[1]{}
\begin{document}

\title[Endemic equilibria and control via vaccination and mitigation] 
      {A look at endemic equilibria of compartmental epidemiological models and model control via vaccination and mitigation}

\author{Monique Chyba}
\address{University of Hawai`i at M\={a}noa, 2565 McCarthy Mall,  Honolulu, HI 96822, USA}
\email{chyba@hawaii.edu}

\author{Taylor Klotz}
\address{University of Hawai`i at M\={a}noa, 2565 McCarthy Mall,  Honolulu, HI 96822, USA}
\email{klotz@hawaii.edu}

\author{Yuriy Mileyko}
\address{University of Hawai`i at M\={a}noa, 2565 McCarthy Mall,  Honolulu, HI 96822, USA}
\email{ymileyko@hawaii.edu}

\author{Corey Shanbrom}
\address{California State University, Sacramento, 6000 J St., Sacramento, CA 95819, USA}
\email{corey.shanbrom@csus.edu}

%\affil[3]{\orgdiv{Department}, \orgname{Organization}, \orgaddress{\street{Street}, \city{City}, \postcode{610101}, \state{State}, \country{Country}}}

%%==================================%%
%% sample for unstructured abstract %%
%%==================================%%

\begin{abstract}
Compartmental models have long served as important tools in mathematical epidemiology, with their usefulness highlighted by the recent COVID-19 pandemic.  However, most of the classical models fail to account for certain features of this disease and others like it, such as the ability of exposed individuals to recover without becoming infectious, or the possibility that asymptomatic individuals can indeed transmit the disease but at a lesser rate than the symptomatic. 

In the first part of this paper we propose two new compartmental epidemiological models and study their equilibria, obtaining an endemic threshold theorem for the first model.  In the second part of the paper, we treat the second model as an affine control system with two controls: vaccination and mitigation.  We show that this system is static feedback linearizable, present some simulations, and investigate of an optimal control version of the problem.  We conclude with some open problems and ideas for future research.
\end{abstract}

\keywords{Compartmental model, endemic equilibrium, endemic threshold, static feedback linearization, optimal control}

%%\pacs[JEL Classification]{D8, H51}

%%\pacs[MSC Classification]{35A01, 65L10, 65L12, 65L20, 65L70}

\maketitle

\section{Introduction} \label{sec: intro}

When modeling epidemics, compartmental models are vital for studying infectious diseases by providing a way to analyze the dynamics of the disease spread over time. The most basic of such models is known as the SIR model, which groups the population into three compartments (susceptible, infectious, recovered) and has a simple flow where an individual moves from susceptible to infectious to recovered (\cite{Bjornstad1}). An additional compartment called the exposed group, can be included to obtain the SEIR model (\cite{Martcheva}). This model is better suited for diseases with a latent period, the time when an individual has contracted the disease but is unable to infect others.  %If the immunity of recovered individuals wears off over time, the model can be further generalized to a SEIRS model (\cite{Bjornstad2}).  

Although the SEIR model is a more accurate portrayal of an infectious disease than the SIR model, as most infectious diseases have a latent period, one limitation of this model is that it assumes that when someone recovers from the disease, they are immune to it forever. This is unrealistic because one can lose immunity over time.  % when a disease mutates and evolves.   
   The SEIRS model is used to rectify this issue as this model assumes that individuals in the recovered group are able to return to the susceptible group (\cite{Bjornstad2}). 
%Another model to consider is the SE(R)IRS model, which is a model that has not yet been analyzed thoroughly. In this model, an individual can go from the exposed group straight to the recovered group. COVID-19 is one infectious disease in which this model can be applied to {\color{red}\cite{}}.

The COVID-19 pandemic highlighted a few shortcomings of the classical SEIR models, as the latter do not account for the possibility of individuals staying asymptomatic throughout the course of the infection (\cite{Caturano, Johansson}) but at the same time capable of infecting susceptible individuals (\cite{Gandhi, Nogrady, Pollock}). One of the first models that looked into such a possibility was due to E. Sontag and collaborators in \cite{Sontag1}, where the authors studied the effects of social distancing.
Here we generalize the standard SEIR model in a similar way by introducing the SE(R)IRS model (Section \ref{sec: new SE(R)IRS}). As the acronym suggests, individuals in the $E$ compartment can pass directly to the $R$ compartment without ever entering the $I$ compartment. We should point out that in this paper we think of the $E$ compartment as representing infected but asymptomatic individuals, while the $I$ compartment represents individuals who are both infected and symptomatic. We have chosen to retain the traditional labeling for simplicity, although some modern authors use the letter $A$ to denote the asymptomatic compartment \cite{Sontag1}.
We also allow individuals from the $E$ compartment to infect those who are susceptible, although at a lesser rate than those from the $I$ compartment.

In Section \ref{sec: new SVE(R)IRS} we generalize further, adding a $V$ compartment representing vaccinated individuals.  The resulting model, which we call SVE(R)IRS, is significantly more complicated.  We are still able to characterize equilibria in terms of the basic reproductive number, but fail to prove an endemic threshold theorem; thus this section is an open invitation to future research.  %A novel feature is that one can treat the vaccination rate as a control (as it depends on people, not the disease) and subsequently consider some interesting optimal control problems.  

Creating a model is one thing, while analyzing it is quite another.  In the first half of the paper we have chosen to focus our analysis on the equilibria of the systems and their stability.  This choice was also motivated by COVID-19 and what will be the ``end" of the pandemic.  As the concept of herd immunity has received much attention in both the media and academia (\cite{Anderson, Aschwanden, Britton, Fontanet, Randolph}), the same cannot be said about of the notion of endemic equilibrium. There is a mathematical foundation for the idea of herd immunity (\cite{Hethcote}), but as we demonstrate in Section \ref{sec: discussion}, this does not mean the disease is eradicated.  It is compatible with what we consider the more relevant idea of an endemic equilibrium: that the disease will always exist (hopefully in small enough numbers to no longer characterize a pandemic).  Moreover, the stability of such an endemic equilibrium would reflect the possibility that new outbreaks or variants could cause spikes in infections, but that over time these numbers would drift back towards some state of ``new normal" (see \cite{Florin3} for a nice overview of stability for SIRS models). The main idea is to design maintenance strategies to control the dynamic of the spread of the disease (through a yearly vaccine or seasonal non-pharmaceutical measures) to stay in a neighborhood of a sustainable endemic equilibrium.

While this discussion makes clear that our models and objects of study are motivated by COVID-19, we hope that our contributions can be applied to other infectious diseases with similar characteristics, including those yet to be discovered.  

As in any mathematical modeling, there is naturally a trade-off between a model's complexity and its accuracy.  In many ways the classical SIR model is useful mainly due to its simplicity, making both mathematical analysis and simulations painless.  But its accuracy may be consequently limited.  On the other hand, much more complicated compartmental models, such as that in \cite{Chyba}, may represent the dynamics of the disease very well at the expense of being computationally difficult.  We hope that, like the popular SEIRS model, the models introduced here strike a reasonable balance by being simple enough for elementary dynamical systems theory and computations, while proving more flexible and accurate than the SEIRS model.  In particular, Theorem \ref{thm: main thm} characterizes the stability of both the endemic and disease-free equilibria in terms of the basic reproductive number $\mathfrak R_0$ using only basic theory.  But ignoring the effects of vaccination greatly misrepresents the course of pandemics like COVID-19.  Yet our SVE(R)IRS model, while more accurate, was just complicated enough that similar analyses failed and we could prove no such theorem.  For these reasons we believe these models lie somewhere near the right balance of complexity and simplicity.  To our knowledge, neither has appeared in the literature before.

The second half of the paper, Section \ref{sec: control}, treats the SVE(R)IRS model as an affine control system with two controls: vaccination and mitigation. This approach is similar to that of E. Sontag and collaborators in such works as \cite{Sontag4, Sontag2, Sontag3}. However, while the authors of the latter paper focus on a single input control system, with the control representing non-pharmaceutical mitigation measures, we consider a bi-input control system, with the second control representing a vaccination rate.  We study the resulting control problem from several perspectives.  In Section \ref{subsec: SFL} we prove that the system is static feedback linearizable, giving the explicit feedback transformation, and observe that it maps equilibria to equilibria; we also analyze the boundary conditions of both the original and transformed systems.  In Section \ref{subsec: simulations} we choose conceptually realistic control curves and explore the corresponding trajectories in both the original and linearized systems.  In Section \ref{subsec: optimal} we fix one control, and consider the 1-input system from an optimal control perspective.  For the time-minimal problem we employ the Maximum Principle and analyze the singular controls.

We should mention that applications of control systems in biological and medical fields has seen an immense contribution from E. Sontag. In addition to producing a plethora of very influential papers, he trained and mentored generations of talented scientists and mathematicians who are now continuing in his footsteps and work on further expanding the applicability of control systems in various scientific disciplines.

\section{Two new compartmental models}
\subsection{SE(R)IRS model}\label{sec: new SE(R)IRS}

The usual SEIRS model (\cite{Bjornstad2}) can be visualized as
$$ \xymatrix{
S  \ar[r]^{\beta I/n} &   E\ar[r]^{\sigma} &  I\ar[r]^{\gamma} & R \ar@/^1pc/[lll]^{\omega}
}
$$
where
\begin{itemize}
    \item $\beta$ is the transmission rate, the average rate at which an infected individual can infect a susceptible
    \item $n$ is the population size
    \item $1/\sigma$ is the latency period
    \item $1/\gamma$ is the symptomatic period
    \item $1/\omega$ is the period of immunity.
\end{itemize}
All parameters are necessarily non-negative.
Note that, for simplicity, here and throughout this paper we choose to present our models without vital dynamics (also known as demography), often represented by the natural birth and death rates $\Lambda$ and $\mu$. This is motivated by the fact that compartmental models are well suited for larger population since they average populations into compartments, and therefore  the change in total population due to overall death and birth is negligible. See \cite{Chengjun} for an analysis with varying total population on a slightly different model (similar results). Also note that when $\omega=0$, this reduces to the usual SEIR model, and one can further recover the simple SIR model by letting $\sigma \to \infty$.   

As described in Section \ref{sec: intro}, we now think of individuals in the $E$ compartment as infected but asymptomatic (some authors call this interpretation a SAIRS model), while individuals in the $I$ compartment are infected and symptomatic.  Therefore in our SE(R)IRS model certain asymptomatic individuals can recover without ever becoming symptomatic; the duration of the course of their infection is denoted by $1/\delta$.  Moreover, asymptomatic individuals can indeed infect susceptible individuals, however they do so at a reduced rate when compared to symptomatic individuals; this reduction is accounted for by the parameter $\alpha$.  We may assume $\alpha \in [0,1]$ and $\delta\geq 0$.  Note that when $\alpha =\delta=0$ we recover the SEIRS model.

The SE(R)IRS model can be visualized as
$$
\xymatrix{
S  \ar[rr]^{\beta (I+\alpha E)/n} &  & E\ar[r]^{\sigma} \ar@/^2pc/[rr]^{\delta} &  I\ar[r]^{\gamma} & R \ar@/^1pc/[llll]^{\omega}
}
$$
and the corresponding dynamical system is given by 
\begin{align}
\frac{d S}{dt} &= -\beta S (I+\alpha E)/n + \omega R \label{eq-1}\\
\frac{d E}{dt} &=  \beta S (I+\alpha E)/n - (\sigma + \delta) E\\
\frac{d I}{dt} &= \sigma E - \gamma I \\
\frac{d R}{dt} &= \delta E + \gamma I - \omega R.\label{eq-4}
\end{align}

%In any of these various compartmental models, the equilibrium points come in two types.  A {\it disease-free equilibrium} has no individuals in the $E, I$, or $R$ compartments; this represents a steady state where there is no disease at all.  Any other equilibrium point is considered an {\it endemic equilibrium}; this represents a constant state where there is always some proportion of the population infected by the disease.  The first step towards analyzing either type of equilibrium is to calculate the basic reproductive number, which represents the average number of cases directly resulting from a single infection in a population of only susceptible individuals {\color{red} Reference?}.
One of the key concepts in epidemiology is the basic reproduction number, $\mathfrak R_0$. It is defined as the number of individuals that are infected by a single infected individual during its entire course of infection, in an entire susceptible population.
\begin{proposition}
The SE(R)IRS basic reproductive number is 
$$ \mathfrak{R}_0 = \left(\frac{\alpha \gamma +\sigma}{\delta+\sigma}\right) \left(\frac{\beta}{\gamma} \right).
$$
\end{proposition}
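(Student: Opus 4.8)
The plan is to compute $\mathfrak{R}_0$ by the next-generation matrix method. The first step is to locate the disease-free equilibrium (DFE): setting $E=I=R=0$ makes the right-hand sides of all four equations vanish, and since the total population $n=S+E+I+R$ is conserved, the DFE is $(S,E,I,R)=(n,0,0,0)$.

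Next I would isolate the infected compartments $E$ and $I$ and write their dynamics as $\dot{x}=\mathcal{F}(x)-\mathcal{V}(x)$, where $\mathcal{F}$ is the vector of rates of appearance of \emph{new} infections and $\mathcal{V}$ collects the remaining transfer terms. The natural (and correct) choice here is $\mathcal{F}=\bigl(\beta S(I+\alpha E)/n,\ 0\bigr)^{T}$ and $\mathcal{V}=\bigl((\sigma+\delta)E,\ \gamma I-\sigma E\bigr)^{T}$; in particular the term $\alpha\beta SE/n$ belongs in $\mathcal{F}$, since it records a susceptible becoming newly infected (by an asymptomatic carrier), not an internal transition between infected classes.

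Then I would linearize at the DFE, where $S=n$, to obtain the $2\times 2$ matrices
$$F=\begin{pmatrix}\alpha\beta & \beta\\ 0 & 0\end{pmatrix},\qquad V=\begin{pmatrix}\sigma+\delta & 0\\ -\sigma & \gamma\end{pmatrix},$$
invert the lower-triangular $V$, and form the next-generation matrix $FV^{-1}$. Because $F$ has a zero second row, $FV^{-1}$ is rank one, so its spectral radius---which by definition is $\mathfrak{R}_0$---equals its $(1,1)$ entry, namely $\beta(\alpha\gamma+\sigma)/\bigl(\gamma(\sigma+\delta)\bigr)$, which is exactly the claimed formula after regrouping the factors.

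I do not anticipate a real obstacle: once the splitting is fixed, everything reduces to a short $2\times 2$ linear-algebra computation. The one point that genuinely requires care is that very choice of splitting---misassigning $\alpha\beta SE/n$ to $\mathcal{V}$ instead of $\mathcal{F}$ would produce a different (and incorrect) expression. As a sanity check one can rederive the result heuristically: a newly infected individual spends on average $1/(\delta+\sigma)$ time units in $E$, transmitting at rate $\alpha\beta$, then with probability $\sigma/(\delta+\sigma)$ moves on to $I$, where it spends on average $1/\gamma$ time units transmitting at rate $\beta$; summing these contributions gives $\tfrac{\alpha\beta}{\delta+\sigma}+\tfrac{\sigma}{\delta+\sigma}\cdot\tfrac{\beta}{\gamma}$, which rearranges to the stated $\mathfrak{R}_0$.
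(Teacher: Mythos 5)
Your proposal is correct and follows essentially the same route as the paper: the same disease-free equilibrium $(n,0,0,0)$, the same splitting giving $F=\begin{pmatrix}\alpha\beta & \beta\\ 0 & 0\end{pmatrix}$ and $V=\begin{pmatrix}\sigma+\delta & 0\\ -\sigma & \gamma\end{pmatrix}$, and the spectral radius of $FV^{-1}$ read off from its $(1,1)$ entry. The heuristic expected-infections check is a nice addition but not needed.
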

%According to Hethcote (SIAM Reivew, 2000), one has herd immunity when $R>1-\frac{1}{\mathfrak R_0}$.

\begin{proof}
We follow \cite{Heffernan} by computing $\mathfrak R_0$ as the spectral radius of the next generation matrix.  First, it is easy to see that the system has a disease-free equilibrium at $(S,E,I,R)=(n, 0, 0, 0)$.
We compute 
$$
F=\begin{pmatrix}
\alpha \beta & \beta \\ 0 & 0 
\end{pmatrix}
\qquad \text{and} \qquad 
V=\begin{pmatrix}
\sigma + \delta & 0 \\ -\sigma & \gamma 
\end{pmatrix};
$$
see \cite{Heffernan} for their precise definitions (matrices of partial derivatives of the rate of appearance of new infections and of the rate of transfer of individuals between compartments, respectively). Thus our next generation matrix is 
$$
FV^{-1}=\begin{pmatrix}
\frac{\beta (\alpha \gamma +\sigma)}{\gamma(\delta+\sigma)} & \frac{\beta }{\gamma} \\ 0 & 0 
\end{pmatrix}.
$$
The basic reproductive number $\mathfrak R_0$ is the spectral radius of this operator, which is the largest eigenvalue $ \left(\frac{\alpha \gamma + \sigma}{\delta +\sigma }\right) \left(\frac{\beta}{\gamma} \right).$
\end{proof}

Note that equations (\ref{eq-1})-(\ref{eq-4}) imply that $n=S+E+I+R$ is constant, an assumption which is common in mathematical epidemiology (\cite{Martcheva}).
This allows us to reduce to a 3-dimensional system in $S,E,I$, with the reduced equations given by
\begin{align}
\frac{d S}{dt} &= -\beta S (I+\alpha E)/n + \omega (n-S-E-I) \label{eq-5}\\
\frac{d E}{dt} &=  \beta S (I+\alpha E)/n - (\sigma + \delta) E\\
\frac{d I}{dt} &= \sigma E - \gamma I.\label{eq-7}
\end{align}
In the sequel we will study this simpler version of the system, recovering the value of $R$ when convenient.

Our first result, proved in the next two sections, is the following.
\begin{theorem} \label{thm: main thm}
If $ \mathfrak R_0<1$ then the disease-free equilibrium is locally asymptotically stable and the endemic equilibrium is irrelevant.  
If $ \mathfrak R_0>1$ then the endemic equilibrium is locally asymptotically stable and the disease-free equilibrium is unstable.
\end{theorem}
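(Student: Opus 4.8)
The plan is to pin down both equilibria by hand, linearize the reduced $3\times 3$ system at each, and extract stability from the Routh--Hurwitz criterion. \textbf{Step 1: the equilibria.} Setting the right-hand sides of the reduced system to zero, the $I$-equation gives $I=\sigma E/\gamma$; inserting this into the $E$-equation leaves either $E=0$, i.e. the disease-free equilibrium $(n,0,0)$, or $\beta S(\alpha\gamma+\sigma)=\gamma n(\sigma+\delta)$, i.e. $S^*=n/\mathfrak R_0$. Feeding $S^*$ back into the $S$-equation and solving the resulting linear equation for $E$ gives
$$
E^*=\frac{\omega n\,(1-1/\mathfrak R_0)}{(\sigma+\delta)+\omega(1+\sigma/\gamma)},\qquad I^*=\frac{\sigma}{\gamma}\,E^*,\qquad R^*=\frac{\sigma+\delta}{\omega}\,E^*.
$$
Thus the endemic equilibrium has all coordinates strictly positive precisely when $\mathfrak R_0>1$; when $\mathfrak R_0<1$ it has $E^*<0$, hence lies outside the biologically meaningful region --- the sense in which it is ``irrelevant.'' Throughout I assume $\omega>0$; the degenerate case $\omega=0$ is the SEIR model, in which the entire $S$-axis is filled with equilibria and must be treated on its own.

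\textbf{Step 2: the disease-free equilibrium.} The Jacobian at $(n,0,0)$ has first column $(-\omega,0,0)^{\mathsf T}$, so one eigenvalue is $-\omega<0$ and the remaining two are those of $\left(\begin{smallmatrix}\alpha\beta-(\sigma+\delta)&\beta\\\sigma&-\gamma\end{smallmatrix}\right)$. Its determinant equals $\gamma(\sigma+\delta)-\beta(\alpha\gamma+\sigma)=\gamma(\sigma+\delta)(1-\mathfrak R_0)$ and its negative trace equals $\gamma+\sigma+\delta-\alpha\beta$. If $\mathfrak R_0<1$ the determinant is positive, and since $\mathfrak R_0<1$ also forces $\alpha\beta<\sigma+\delta$ (drop the non-negative term $\beta\sigma$ from $\beta(\alpha\gamma+\sigma)<\gamma(\sigma+\delta)$), the negative trace is positive too; by the $2\times 2$ Routh--Hurwitz test the disease-free equilibrium is locally asymptotically stable. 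If $\mathfrak R_0>1$ the determinant is negative, so the block has a positive eigenvalue and the equilibrium is unstable.

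\textbf{Step 3: the endemic equilibrium (when $\mathfrak R_0>1$).} Set $b=\beta S^*/n=\gamma(\sigma+\delta)/(\alpha\gamma+\sigma)$ and $c=\beta(I^*+\alpha E^*)/n>0$. The Jacobian there is
$$
J^*=\begin{pmatrix}-c-\omega&-\alpha b-\omega&-b-\omega\\ c&\alpha b-(\sigma+\delta)&b\\ 0&\sigma&-\gamma\end{pmatrix}.
$$
The decisive point is that the equilibrium identity $b(\alpha\gamma+\sigma)=\gamma(\sigma+\delta)$ makes the lower-right $2\times 2$ minor of $J^*$ vanish, and that $\sigma+\delta-\alpha b=\sigma(\sigma+\delta)/(\alpha\gamma+\sigma)\ge 0$. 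Granting these, the characteristic polynomial $\lambda^3+a_1\lambda^2+a_2\lambda+a_3$ has coefficients that are sums of manifestly non-negative monomials in the parameters: directly, $a_1=c+\omega+\gamma+\tfrac{\sigma(\sigma+\delta)}{\alpha\gamma+\sigma}>0$ and $a_3=c\bigl(\gamma(\sigma+\delta)+\omega(\gamma+\sigma)\bigr)>0$. The last Routh--Hurwitz condition $a_1a_2>a_3$ then follows by expanding the product $a_1a_2$ and matching: the term $c\gamma(\sigma+\delta)$ of $a_3$ cancels against the $\gamma\cdot c(\sigma+\delta)$ contribution, $c\omega\sigma$ is dominated by $c\omega(\sigma+\delta)$, and everything left over is a positive combination of $c,\omega,\gamma$ and $\tfrac{\sigma(\sigma+\delta)}{\alpha\gamma+\sigma}$. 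Hence $J^*$ is Hurwitz and the endemic equilibrium is locally asymptotically stable.

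\textbf{Expected obstacle.} Steps 1 and 2 are routine. The genuine difficulty is the cubic Routh--Hurwitz inequality $a_1a_2-a_3>0$ at the endemic equilibrium: expanded carelessly it is an opaque polynomial inequality in six parameters with no evident reason to hold. What saves it is the equilibrium relation $b(\alpha\gamma+\sigma)=\gamma(\sigma+\delta)$, which collapses enough cross-terms that the whole expression becomes a sum of non-negative monomials; arranging the bookkeeping so this positivity is transparent is the real content of the proof.
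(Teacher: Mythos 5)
Your proposal is correct, and while it follows the same three-part skeleton as the paper (irrelevance of the endemic equilibrium when $\mathfrak R_0<1$, stability of the endemic equilibrium when $\mathfrak R_0>1$, and the dichotomy for the disease-free equilibrium), the technical execution of the two stability steps is genuinely different and noticeably more self-contained. For the disease-free equilibrium the paper computes the three eigenvalues explicitly, does algebra to show the discriminant is a sum of squares plus $4\beta\sigma$, and then invokes Mathematica to determine the sign of the third eigenvalue; your block-triangular reduction to the $2\times2$ matrix $\bigl(\begin{smallmatrix}\alpha\beta-(\sigma+\delta)&\beta\\ \sigma&-\gamma\end{smallmatrix}\bigr)$, with determinant $\gamma(\sigma+\delta)(1-\mathfrak R_0)$ and trace controlled by the observation that $\mathfrak R_0<1$ forces $\alpha\beta<\sigma+\delta$, gives the same conclusion (including instability for $\mathfrak R_0>1$) by hand. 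For the endemic equilibrium the paper applies Fuller's determinant/trace/bialternate-sum criteria and certifies negativity of $\det G$ via a machine-expanded polynomial of roughly a hundred monomials; you instead use the classical cubic Routh--Hurwitz conditions, and the key simplifications you isolate --- the equilibrium identity $b(\alpha\gamma+\sigma)=\gamma(\sigma+\delta)$ killing the lower-right $2\times2$ minor, and $\sigma+\delta-\alpha b=\sigma(\sigma+\delta)/(\alpha\gamma+\sigma)>0$ --- are exactly what make $a_1>0$, $a_2>0$, $a_3=c\bigl(\gamma(\sigma+\delta)+\omega(\gamma+\sigma)\bigr)>0$ and $a_1a_2>a_3$ checkable by hand; I verified the full expansion and your cancellation/domination bookkeeping (using $\sigma\le\sigma+\delta=d+\alpha b$ with $d=\sigma(\sigma+\delta)/(\alpha\gamma+\sigma)$) does close, with strictly positive terms left over. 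Incidentally, your $a_3$ is the correct value of $-\det M$; the paper's displayed $\det(M)=-\epsilon\omega(\delta+\sigma)(\sigma\delta+\gamma(\delta+\sigma+\omega))$ appears to contain a typo ($\sigma\delta$ should be $\sigma\omega$), though this does not affect its sign or the lemma. Your derivation of the equilibrium also supplies the uniqueness that the paper merely asserts, and your explicit caveat that $\omega>0$ (excluding the degenerate SEIR case) is an assumption the paper makes only implicitly. What your route buys is a proof with no computer algebra and transparent positivity; what the paper's buys is a mechanical procedure (Fuller's criteria) that the authors hoped, as they note, might scale to the SVE(R)IRS case where hand computation is infeasible.
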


\begin{proof}
The theorem follows from Lemmas \ref{lemma: new endemic relevance}, and \ref{lemma: new endemic stability}, and \ref{lemma: new disease free}.
\end{proof}

Here ``irrelevant" means epidemiologically nonsensical, as certain compartments would contain negative numbers of people; it still exists mathematically.
This theorem is sometimes known as the ``endemic threshold property", where $\mathfrak R_0$ is considered a critical threshold.  In \cite{Hethcote}, Hethcote describes this property as ``the usual behavior for an endemic model, in the sense that the disease dies out below the threshold, and the disease goes to a unique endemic equilibrium above the threshold."  The SEIR version is derived nicely in Section 7.2 of \cite{Martcheva}.  
The SEIRS version can be found in \cite{Liu}.
%{\color{blue} I have citations \cite{Li2} and \cite{Liu} for proofs of global stability for SEIR, but can't find anything for local stability of SEIR or SEIRS.  Probably just well known?  I'd like to either just ignore the attribution, or have help searching for the original sources.  Monique, any opinion here?}

\subsubsection{Analysis of endemic equilibria}
Our system (\ref{eq-5})-(\ref{eq-7}) has a unique endemic equilibrium at 
$$p =(S,E,I)= \frac{n}{\mathfrak R_0}\left(1,\, \omega \epsilon, \, \frac{\sigma \omega}{\gamma} \epsilon \right)$$
where
\begin{equation}
\epsilon =   \frac{1}{\delta + \sigma}\ \frac{\beta(\alpha \gamma + \sigma) - \gamma (\delta + \sigma)}{\sigma \omega + \gamma ( \delta + \sigma + \omega)}.
\end{equation}
If desired, one can determine from the constant total population the recovered population at this equilibrium: $R= (\sigma + \delta)\epsilon$.

\begin{comment} 
Our system (\ref{eq-5})-(\ref{eq-7}) has a unique endemic equilibrium at 
$$p =(S,E,I,R)= \frac{n}{\mathfrak R_0}\left(1,\, \omega \epsilon, \, \frac{\sigma \omega}{\gamma} \epsilon,\, (\sigma + \delta)\epsilon\right)$$
where $R$ is deduced from the total constant population and 
\begin{equation}
\epsilon =   \frac{1}{\delta + \sigma}\ \frac{\beta(\alpha \gamma + \sigma) - \gamma (\delta + \sigma)}{\sigma \omega + \gamma ( \delta + \sigma + \omega)}.
\end{equation}
%{\color{blue} Maybe simpler to remove $1/(\sigma + \delta)$ from $\epsilon$?}
\end{comment}

Note that this endemic equilibrium  is only realistic if all coordinates are positive, which requires $\epsilon$ positive, 
%which requires 
%\begin{equation}
%   \beta  (\alpha \gamma + \sigma) >   \gamma (\delta + \sigma).
%\end{equation}
%It is equivalent to
which is equivalent to
\begin{equation}
    \mathfrak{R}_0>1.
\end{equation}
In other words, we have the following.
\begin{lemma}\label{lemma: new endemic relevance}
 If $\mathfrak R_0 >1$ then there exists a unique endemic equilibrium for the SE(R)IRS model.  If $\mathfrak R_0<1$ then no endemic equilbrium exists.
\end{lemma}

The linearization of the reduced system at $p$ is 
$$L
%=\begin{pmatrix}
%-\epsilon \omega (\delta + \sigma)- \omega & -\frac{\alpha \gamma (\delta + \sigma)}{\alpha \gamma + \sigma} -\omega & -\frac{\gamma (\delta + \sigma)}{\alpha \gamma + \sigma} - \omega \\
%\epsilon \omega (\delta + \sigma) & -\frac{\sigma (\delta + \sigma)}{\alpha \gamma + \sigma} & \frac{\gamma (\delta + \sigma)}{\alpha \gamma + \sigma} \\
%0 & \sigma & -\gamma 
%\end{pmatrix}.
=\begin{pmatrix}
-\epsilon \omega (\delta + \sigma)- \omega & -\frac{\alpha\beta}{\mathfrak R_0}  -\omega & -\frac{\beta}{\mathfrak R_0} - \omega \\
\epsilon \omega (\delta + \sigma) & -\frac{\beta \sigma }{\gamma \mathfrak R_0} & \frac{\beta}{\mathfrak R_0} \\
0 & \sigma & -\gamma 
\end{pmatrix}.
$$
%{\color{blue} Could simplify this by writing in terms of $\mathfrak R_0$, or introducing symbol for $(\alpha \gamma + \sigma)/(\delta + \sigma)$.  Worth it?  I do this for $G$ below but we should decide which is better.}
As expected, this matrix does not depend on $n$. 
Unfortunately, the eigenvalues of $L$ are not analytically computable for general parameters.  As such, we implement a different criteria from \cite{Fuller} in the proof of Lemma \ref{lemma: new endemic stability}, which allows one to determine stability without explicitly computing all eigenvalues.
Note that $L$ is nonsingular for generic parameter values.  But the determinant does indeed vanish if and only if $\mathfrak R_0 =1.$

\begin{lemma}\label{lemma: new endemic stability}
If $\mathfrak R_0>1$ then the endemic equilibrium is locally asymptotically stable.
\end{lemma}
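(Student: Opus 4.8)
The plan is to apply the Routh--Hurwitz criterion to the characteristic polynomial of $M$. Write it as $\lambda^3 + a_1\lambda^2 + a_2\lambda + a_3$, so that $a_1 = -\operatorname{tr} M$, $a_2$ is the sum of the three principal $2\times 2$ minors of $M$, and $a_3 = -\det M$; all three eigenvalues then have negative real part, which is exactly local asymptotic stability, precisely when $a_1 > 0$, $a_3 > 0$, and $a_1 a_2 > a_3$ (positivity of $a_2$ is then automatic). Throughout we may use the standing hypothesis $\mathfrak R_0 > 1$, which by Lemma \ref{lemma: new endemic relevance} is equivalent to $\epsilon > 0$; we also take $\omega > 0$, since for $\omega = 0$ the point $p$ degenerates into a disease-free state. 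It is convenient to replace $\beta/\mathfrak R_0$ by $\gamma(\delta+\sigma)/(\alpha\gamma+\sigma)$ wherever it occurs, and to use the equilibrium identities $\sigma E = \gamma I$ and $\beta S(I+\alpha E)/n = (\sigma+\delta)E$ to simplify before expanding.

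The condition $a_1 > 0$ is immediate, since $\operatorname{tr} M = -\omega - \epsilon\omega(\delta+\sigma) - \tfrac{\beta\sigma}{\gamma\mathfrak R_0} - \gamma$ is a sum of nonpositive terms at least one of which ($-\gamma$, say) is strictly negative. For $a_3 > 0$ we must show $\det M < 0$: since it was already noted that $\det M$ vanishes exactly when $\mathfrak R_0 = 1$, and $\det M$ is a continuous function of the parameters on the connected region where $\mathfrak R_0 > 1$ (any two admissible tuples can be joined by first inflating $\beta$), it has constant sign there, and evaluating at the SEIRS specialization $\alpha = \delta = 0$ shows that sign is negative. Equivalently, one expands $\det M$ and extracts a factor $\gamma(\delta+\sigma) - \beta(\alpha\gamma+\sigma) = -\gamma(\delta+\sigma)(\mathfrak R_0 - 1)$, which is negative, multiplied by a manifestly positive cofactor.

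The real work is the remaining inequality $a_1 a_2 > a_3$, a genuine polynomial inequality in the six parameters with no conceptual shortcut; this is where I expect the main difficulty. The plan is to expand $a_1$, $a_2$, $a_3$ as polynomials in $\beta,\sigma,\delta,\gamma,\omega,\alpha$ (after substituting the closed form of $\epsilon$ and clearing the common denominator $\sigma\omega + \gamma(\delta+\sigma+\omega)$), form $a_1 a_2 - a_3$, and present it as a sum of monomials with positive coefficients together with terms carrying an explicit factor of $\beta(\alpha\gamma+\sigma) - \gamma(\delta+\sigma) > 0$, so that each piece is positive under the hypothesis $\mathfrak R_0 > 1$. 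The bookkeeping is heavy enough that I would carry it out with a computer algebra system and report only the resulting factored form; the genuinely hard part is organizing that expansion so that its positivity is transparent rather than an opaque machine verification. As an alternative that would even deliver global stability, one could instead look for a Goh--Volterra Lyapunov function of the form $S - S^*\ln S + c_1(E - E^*\ln E) + c_2(I - I^*\ln I) + c_3(R - R^*\ln R)$ with positive constants $c_i$; but for the local claim required here the Routh--Hurwitz route is the most direct.
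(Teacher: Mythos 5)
Your route is essentially the paper's own. The paper applies Fuller's criteria (12.21)--(12.23): $\operatorname{tr}M<0$, $\det M<0$, and $\det G<0$, where $G$ is the bialternate sum of $M$ with itself; since the eigenvalues of $G$ are the pairwise sums $\lambda_i+\lambda_j$, one has $\det G=a_3-a_1a_2$ for the characteristic polynomial $\lambda^3+a_1\lambda^2+a_2\lambda+a_3$, so these are exactly your three Routh--Hurwitz conditions. Your handling of $a_1>0$ is the same as the paper's, and for $a_3>0$ your second ("equivalently") argument matches the paper's computation $\det M=-\epsilon\omega(\delta+\sigma)\bigl(\sigma\delta+\gamma(\delta+\sigma+\omega)\bigr)$, i.e.\ a manifestly positive cofactor times $\beta(\alpha\gamma+\sigma)-\gamma(\delta+\sigma)$; the continuity-and-connectedness detour is unnecessary and, as written, leans on the unproved textual remark that $\det M=0$ only when $\mathfrak R_0=1$, so prefer the factorization.

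The one substantive shortfall is that the decisive inequality $a_1a_2>a_3$ is only planned, not proved: you state that a CAS expansion would yield a positivity certificate, but none is exhibited, and this is precisely where the paper's proof does its (machine-assisted) work. The paper computes $\det G$ keeping $\epsilon$ as a symbol rather than substituting its closed form, and the output is $-\omega/(\alpha\gamma+\sigma)^2$ times a polynomial in $\alpha,\gamma,\delta,\epsilon,\sigma,\omega$ with all nonnegative coefficients, hence negative once $\epsilon>0$ (equivalent to $\mathfrak R_0>1$ by Lemma \ref{lemma: new endemic relevance}) and $\omega>0$. So the certificate you anticipate does exist and your plan would go through; but if you carry it out, keep $\epsilon$ symbolic --- substituting its closed form and clearing the denominator, as you propose, buries the positivity in a much larger expansion. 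Your explicit assumption $\omega>0$ is a good catch that the paper leaves implicit: when $\omega=0$ both $\det M$ and $\det G$ vanish (the endemic equilibrium degenerates), so strict negativity genuinely requires $\omega>0$.
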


\begin{proof}
We apply the criteria (12.21-12.23) from \cite{Fuller} to $L$: if the determinant and trace of $L$, as well as the determinant of the bialternate product of $L$ with itself, are all negative then all eigenvalues have negative real parts.  Note that the trace is obviously negative, so (12.22) is immediately satisfied.  Now compute
$$\det(L)=-\epsilon \omega (\delta + \sigma) (\sigma \delta + \gamma (\delta + \sigma + \omega))$$
which is clearly negative, so (12.21) is satisfied.  
Finally,  we compute the bialternate sum of $L$ with itself (see \cite{Ludwig} or \cite{Fuller} for a precise definition), 
$$ G= \begin{pmatrix}
-\epsilon \omega (\delta + \sigma) - \omega - \frac{\beta \sigma}{\gamma \mathfrak R_0}   &  \frac{\beta}{\mathfrak R_0}   & \frac{\beta}{\mathfrak R_0} +\omega \\
\sigma &  -\epsilon \omega (\delta + \sigma) - \omega - \gamma & -\frac{\alpha \beta}{\mathfrak R_0} -\omega \\
0 & \epsilon \omega (\delta + \sigma)  &  -\frac{\beta \sigma }{\gamma \mathfrak R_0} -\gamma
\end{pmatrix}
$$
whose determinant
\begin{align*}
\det G &= -\frac{\omega}{(\alpha \gamma + \sigma)^2}
%-(1/((a c + s)^2))
 %w 
 (\alpha^2 \gamma^4 + \alpha^2 \gamma^4 \delta \epsilon + 2 \alpha \gamma^3 \sigma + 2 \alpha \gamma^2 \delta \sigma + \alpha^2 \gamma^4 \epsilon \sigma + 
    2 \alpha \gamma^3 \delta \epsilon \sigma \\
    &+ 
    \alpha \gamma^2 \delta^2 \epsilon \sigma + \alpha \gamma \delta^3 \epsilon \sigma + \gamma^2 \sigma^2 + 
    2 \alpha \gamma^2 \sigma^2 + 2 \gamma \delta \sigma^2 + \delta^2 \sigma^2 + 2 \alpha \gamma^3 \epsilon \sigma^2 + \gamma^2 \delta \epsilon \sigma^2 \\
    &+ 
    2 \alpha \gamma^2 \delta \epsilon \sigma^2 + \gamma \delta^2 \epsilon \sigma^2 + 3 \alpha \gamma \delta^2 \epsilon \sigma^2 + \delta^3 \epsilon \sigma^2 + 
    2 \gamma \sigma^3 + 2 \delta \sigma^3 + \gamma^2 \epsilon \sigma^3 + \alpha \gamma^2 \epsilon \sigma^3 \\
    &+
    2 \gamma \delta \epsilon \sigma^3 + 
    3 \alpha \gamma \delta \epsilon \sigma^3 + 3 \delta^2 \epsilon \sigma^3 + \sigma^4 + \gamma \epsilon \sigma^4 + \alpha \gamma \epsilon \sigma^4 + 
    3 \delta \epsilon \sigma^4 + \epsilon \sigma^5 + \alpha^2 \gamma^3 \omega \\
    &+ 
    2 \alpha^2 \gamma^3 \delta \epsilon \omega    + 
    \alpha^2 \gamma^2 \delta^2 \epsilon \omega + \alpha^2 \gamma^3 \delta^2 \epsilon^2 \omega + \alpha^2 \gamma^2 \delta^3 \epsilon^2 \omega + 
    2 \alpha \gamma^2 \sigma \omega + \alpha \gamma \delta \sigma \omega \\
    &+ 2 \alpha^2 \gamma^3 \epsilon \sigma \omega + 4 \alpha \gamma^2 \delta \epsilon \sigma \omega + 
    \alpha^2 \gamma^2 \delta \epsilon \sigma \omega + 4 \alpha \gamma \delta^2 \epsilon \sigma \omega + 2 \alpha^2 \gamma^3 \delta \epsilon^2 \sigma \omega \\
    &+ 
    2 \alpha \gamma^2 \delta^2 \epsilon^2 \sigma \omega + 3 \alpha^2 \gamma^2 \delta^2 \epsilon^2 \sigma \omega + 2 \alpha \gamma \delta^3 \epsilon^2 \sigma \omega + 
    \gamma \sigma^2 \omega + \alpha \gamma \sigma^2 \omega + \delta \sigma^2 \omega \\
    &+
    4 \alpha \gamma^2 \epsilon \sigma^2 \omega + 2 \gamma \delta \epsilon \sigma^2 \omega + 
    6 \alpha \gamma \delta \epsilon \sigma^2 \omega + 3 \delta^2 \epsilon \sigma^2 \omega + \alpha^2 \gamma^3 \epsilon^2 \sigma^2 \omega + 
    4 \alpha \gamma^2 \delta \epsilon^2 \sigma^2 \omega \\
    &+
    3 \alpha^2 \gamma^2 \delta \epsilon^2 \sigma^2 \omega + \gamma \delta^2 \epsilon^2 \sigma^2 \omega + 
    6 \alpha \gamma \delta^2 \epsilon^2 \sigma^2 \omega + \delta^3 \epsilon^2 \sigma^2 \omega + \sigma^3 \omega + 2 \gamma \epsilon \sigma^3 \omega \\ 
    &+ 
    2 \alpha \gamma \epsilon \sigma^3 \omega + 5 \delta \epsilon \sigma^3 \omega + 2 \alpha \gamma^2 \epsilon^2 \sigma^3 \omega + 
    \alpha^2 \gamma^2 \epsilon^2 \sigma^3 \omega + 2 \gamma \delta \epsilon^2 \sigma^3 \omega + 6 \alpha \gamma \delta \epsilon^2 \sigma^3 \omega \\
    &+ 
    3 \delta^2 \epsilon^2 \sigma^3 \omega + 2 \epsilon \sigma^4 \omega + \gamma \epsilon^2 \sigma^4 \omega + 2 \alpha \gamma \epsilon^2 \sigma^4 \omega + 
    3 \delta \epsilon^2 \sigma^4 \omega + \epsilon^2 \sigma^5 \omega + \alpha^2 \gamma^2 \delta \epsilon \omega^2 \\
    &+ 
    \alpha^2 \gamma^2 \delta^2 \epsilon^2 \omega^2 + \alpha^2 \gamma^2 \epsilon \sigma \omega^2 + 2 \alpha \gamma \delta \epsilon \sigma \omega^2 + 
    2 \alpha^2 \gamma^2 \delta \epsilon^2 \sigma \omega^2 + 2 \alpha \gamma \delta^2 \epsilon^2 \sigma \omega^2 \\
    &+
    2 \alpha \gamma \epsilon \sigma^2 \omega^2 + 
    \delta \epsilon \sigma^2 \omega^2 + \alpha^2 \gamma^2 \epsilon^2 \sigma^2 \omega^2 + 4 \alpha \gamma \delta \epsilon^2 \sigma^2 \omega^2 + 
    \delta^2 \epsilon^2 \sigma^2 \omega^2 + \epsilon \sigma^3 \omega^2 \\
    &+
    2 \alpha \gamma \epsilon^2 \sigma^3 \omega^2 + 
    2 \delta \epsilon^2 \sigma^3 \omega^2 + \epsilon^2 \sigma^4 \omega^2)
\end{align*}
is also negative, satisfying (12.23).
\end{proof}

%{\color{blue} DELETE ONE OR BOTH OF THESE EXAMPLES?}
\begin{example}\label{ex: new endemic}
In all of our examples the time units are taken to be days, and we choose the parameter values
%$$(\alpha, \beta, \gamma, \delta, n, \sigma, \omega)=(.1, .4, 1/7, 1/14, 100, 1/7, 1/90)$$
$$(\alpha, \gamma, \delta, n, \sigma, \omega)=
\Big(\frac{1}{10},  \frac{1}{7}, \frac{1}{14}, 100, \frac{1}{7}, \frac{1}{90} \Big).$$
These numbers are inspired by some of the author's work experience with the State of Hawai`i's COVID-19 response. A helpful reference for this literature is \cite{Carney}. While precise values are not known, and many of these parameters depend on the specific virus variant, these are at least roughly in agreement with some of the literature.  Specifically, we assume that an asymptomatic individual is 10\% as infectious as a symptomatic one, that individuals who become symptomatic have seven day periods of latency and of symptoms, that individuals who never develop symptoms are infected for 14 days, and that the period of immunity is 90 days.  We choose the population size of 100 simply so that compartment values can be interpreted as percentages of a generic population.

When $\beta = 0.4$ we have 
$$ \mathfrak R_0 \approx 2.053$$
and
$$p=(S,E,I,R)\approx(49,  2, 2, 46).$$
The eigenvalues of $L$ are 
$$\lambda_1 \approx -.340, \quad  \lambda_2 \approx -.010 - .031i, \quad  \lambda_3 \approx -.010 + .031i.$$
These three eigenvalues all have negative real part, so the equilibrium is stable.  It appears to be a spiral sink, signifying epidemic waves (\cite{Bjornstad2}), as shown in Figure \ref{fig: endemic}.  %We also observe that for small numbers of initial infections, the number of individuals in the $E$ and $I$ compartments initially rise, before settling down toward the smaller values of the endemic equilibrium.  This may hold a lesson for officials and policy-makers: an initial spike in cases (perhaps caused by a new variant) does not always portend an exponential outbreak necessitating intervention.  With patience the case numbers may naturally drop back to the endemic equilibrium.  

%This example represents a potentially new normal post the COVID-19 pandemic, with relatively small but nonzero proportions of the population infected at any given time.
\end{example}

\begin{figure}[H]
%\centering
\includegraphics[width=7.0cm]{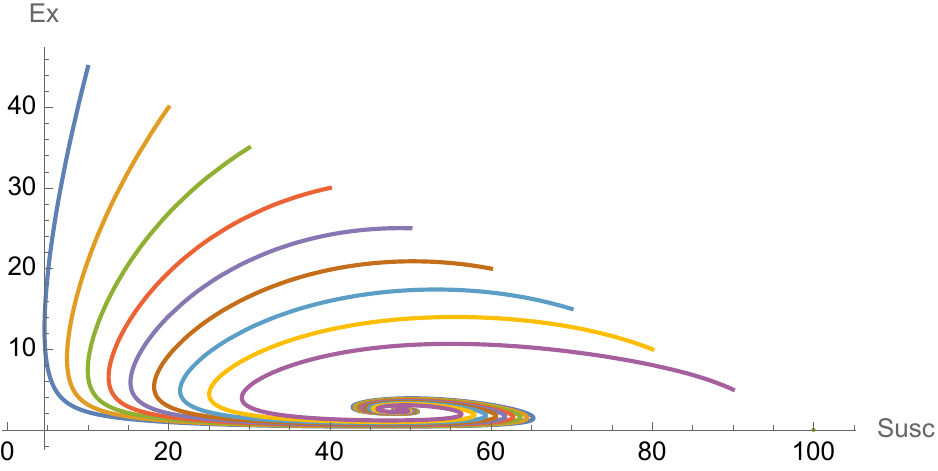}
\includegraphics[width=5.5cm]{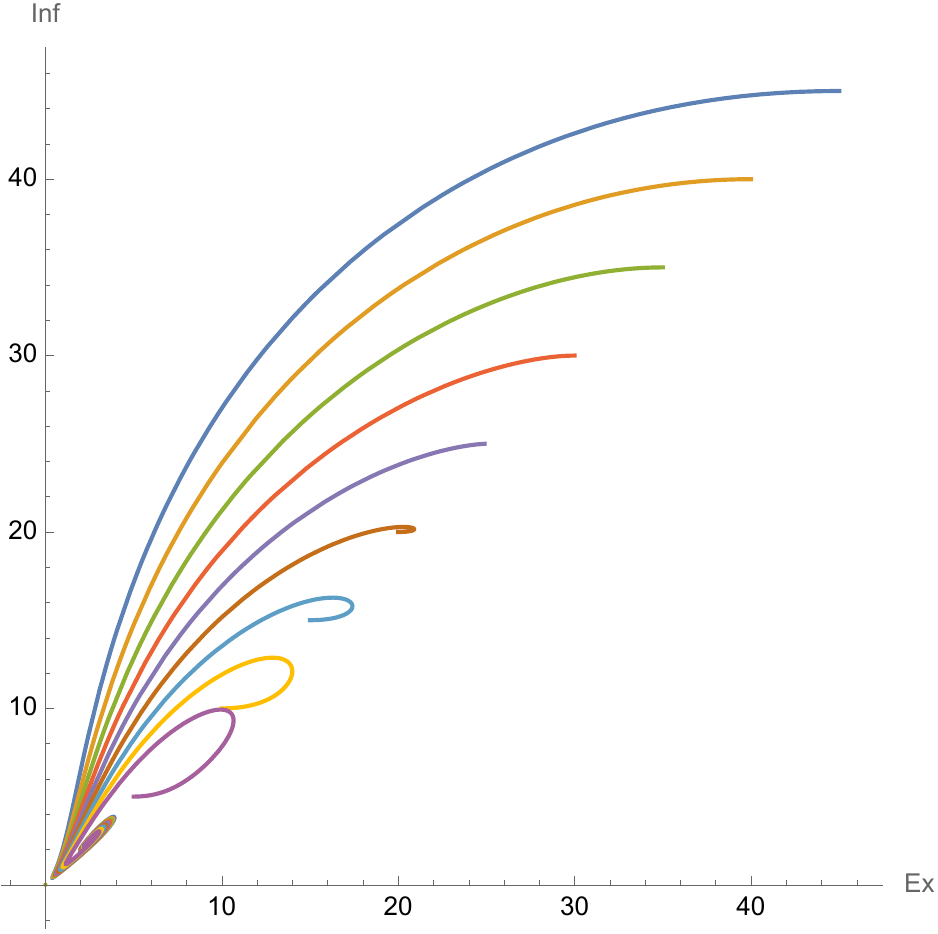}
\caption{The endemic equilibrium of Example \ref{ex: new endemic}. The left plot displays $E$ versus $S$ compartments, while the right plot displays $I$ versus $E$ compartments. Each colored curve represents a different initial condition of the form $(S,E,I,R)=(S_0, (n-S_0)/2, (n-S_0)/2, 0)$ for $S_0 = 10, 20, \dots, 90$.}\label{fig: endemic}
\end{figure}
%\begin{figure}%[h]
%\centering
%\includegraphics[width=7.5cm]{endemic2.pdf}
%\includegraphics[width=7.5cm]{}
%\caption{The endemic equilibrium of Example \ref{ex: new endemic} with colors as in Figure \ref{fig: endemic}.  The left plot displays the $I$ versus $E$ compartments.  The right plot shows the same, zooming in on the equilibrium point.}\label{fig: endemic2}
%\end{figure}

%In all our examples the time units are taken to be days, and we have chosen parameters which are somewhat realistic for COVID-19, at least whenever there is some general agreement in the literature.
%Therefore Example \ref{ex: new endemic} represents a potentially new normal post the COVID-19 pandemic, with relatively small proportions of the population infected at any given time.  Note that we can lower $\beta$ further and still maintain an endemic equilibrium.  Precisely, for $(\alpha, \gamma, \delta,  \sigma)= (1/10,\, 1/7,\,  1/14, \, 1/7)$, we have $\mathfrak{R}_0>1$ if and only if $\beta>15/77 \approx 0.195$.  

%\begin{example} 
%Taking $\beta = 0.2$ and the other parameters as in the previous example yields
%$$\mathfrak R_0 \approx 1.027$$ and the endemic %equilibrium
%$$p=(S,E,I,R) \approx (97.40, 0.12, 0.12, 2.35).$$  In %this relatively realistic scenario, on any given day we %find 97.40\% of our population is susceptible,  0.12\% %is asymptomatic, 0.12\% is symptomatic, and 2.35\% is %recovering yet immune.
%\end{example}

\subsubsection{Analysis of disease-free equilibria}
One easily checks that the system has a disease-free equilibrium at $(S,E,I,R)=(n, 0, 0, 0)$.
The linearization of the reduced system there is 
$$N=
\begin{pmatrix}
 - \omega & -\alpha\beta-\omega & -\beta - \omega \\
0& \alpha\beta-\delta-\sigma & \beta \\
0 & \sigma & -\gamma 
\end{pmatrix}.
$$
This is singular if and only if $\mathfrak R_0=1$,  just like $L$.

\begin{lemma}\label{lemma: new disease free}
The disease-free equilibrium is locally asymptotically  stable if and only if $\mathfrak R_0<1$.
\end{lemma}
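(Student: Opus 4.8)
The plan is to exploit the block structure of $N$. Since the first column of $N$ is $(-\omega,0,0)^{\mathsf T}$, expanding $\det(N-\lambda I)$ along that column factors the characteristic polynomial as $(-\omega-\lambda)$ times the characteristic polynomial of the lower-right $2\times 2$ block
$$B=\begin{pmatrix} \alpha\beta-\delta-\sigma & \beta \\ \sigma & -\gamma \end{pmatrix}.$$
Thus the spectrum of $N$ consists of $-\omega$ (which is non-positive, and strictly negative in the range of interest $\omega>0$) together with the two eigenvalues of $B$. Consequently the disease-free equilibrium is locally asymptotically stable precisely when both eigenvalues of $B$ have negative real part, so the whole question reduces to a $2\times 2$ matrix.

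For $B$ I would invoke the Routh--Hurwitz criterion in dimension two: both eigenvalues have negative real part if and only if $\operatorname{tr}(B)<0$ and $\det(B)>0$. A direct computation gives $\operatorname{tr}(B)=\alpha\beta-\gamma-\delta-\sigma$ and $\det(B)=\gamma(\delta+\sigma)-\beta(\alpha\gamma+\sigma)$. The key observation is then immediate from the formula for $\mathfrak R_0$ in the Proposition:
$$\det(B)>0 \iff \beta(\alpha\gamma+\sigma)<\gamma(\delta+\sigma) \iff \mathfrak R_0<1.$$
(This also re-derives the remark that $N$ is singular exactly when $\mathfrak R_0=1$.) It remains to check that the trace condition imposes nothing extra: if $\mathfrak R_0<1$ then $\alpha\beta\gamma+\beta\sigma<\gamma\delta+\gamma\sigma$, and since $\beta\sigma\ge 0$ this forces $\alpha\beta\gamma<\gamma(\delta+\sigma)$, hence $\alpha\beta<\delta+\sigma<\gamma+\delta+\sigma$ using $\gamma>0$, so $\operatorname{tr}(B)<0$ automatically. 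Therefore $\mathfrak R_0<1$ yields both Routh--Hurwitz inequalities and hence local asymptotic stability of the disease-free equilibrium.

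For the converse I would argue by the contrapositive. If $\mathfrak R_0>1$ then $\det(B)<0$, so $B$ has two real eigenvalues of opposite sign; in particular $N$ has an eigenvalue with positive real part and the equilibrium is unstable. If $\mathfrak R_0=1$ then $\det(B)=0$, so $N$ has a zero eigenvalue and the equilibrium fails to be asymptotically stable. Combining the two directions gives the stated equivalence.

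\textbf{Main obstacle.} Frankly there is no serious obstacle here, in contrast to Lemma~\ref{lemma: new endemic stability}: because the linearization at the disease-free equilibrium is block-triangular, the eigenvalue analysis is elementary and no bialternate product is needed. The only points requiring a little care are the bookkeeping that identifies $\det(B)>0$ with $\mathfrak R_0<1$, the observation that the trace condition comes for free so that $\mathfrak R_0=1$ is the sole threshold, and the treatment of the borderline case $\mathfrak R_0=1$ in establishing the "only if" direction.
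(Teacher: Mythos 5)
Your proof is correct, and it handles the decisive step differently from the paper. Both arguments rest on the same spectral split: the paper also finds $\lambda_1=-\omega$ and two further eigenvalues that are precisely the roots of the characteristic polynomial of your block $B$ (its quadratic has trace $\alpha\beta-\gamma-\delta-\sigma$ and determinant $\gamma(\delta+\sigma)-\beta(\alpha\gamma+\sigma)$). The difference is in how that quadratic is analyzed: the paper writes out the two roots explicitly via the quadratic formula, shows by simplifying the discriminant to $4\beta\sigma+(\alpha\beta+\gamma-\delta-\sigma)^2$ that they are real, bounds one root by $-\gamma$, and then resorts to Mathematica to verify that the sign of the remaining root switches exactly at $\mathfrak R_0=1$. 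Your route replaces all of that with the two-dimensional Routh--Hurwitz criterion, where $\det(B)>0\iff\mathfrak R_0<1$ falls out of the formula for $\mathfrak R_0$ by inspection and the trace condition is shown to be implied by $\mathfrak R_0<1$ (using $\gamma>0$); this is fully by hand, avoids radicals, makes the threshold transparent, and also recovers the instability statement for $\mathfrak R_0>1$ from $\det(B)<0$. One small caveat, which applies equally to the paper's proof: at the borderline $\mathfrak R_0=1$ the linearization has a zero eigenvalue, so the equilibrium is not hyperbolic and linearization alone does not strictly decide asymptotic stability of the nonlinear system there; your treatment of that case is at the same level of rigor as the paper's, so it is not a gap relative to the published argument, but a fully rigorous ``only if'' at $\mathfrak R_0=1$ would need a center manifold (or direct) argument.
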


\begin{proof}
The eigenvalues of $N$  are 
\begin{align}
    \lambda_1 &=-\omega \\
    \lambda_2 &= \frac{1}{2}\left(\alpha \beta -\gamma - \delta - \sigma - \sqrt{(-\alpha \beta +\gamma + \delta + \sigma)^2 -4(-\alpha \beta \gamma +\delta\gamma - \beta \sigma + \gamma \sigma)}  \right) \\
    \lambda_3 &= \frac{1}{2}\left(\alpha \beta -\gamma - \delta - \sigma + \sqrt{(-\alpha \beta +\gamma + \delta + \sigma)^2 -4(-\alpha \beta \gamma +\delta\gamma - \beta \sigma + \gamma \sigma)}  \right).
\end{align}

It is not obvious, but algebra shows that all three eigenvalues are real since our parameters are positive: the discriminant simplifies to $4\beta\sigma+(\alpha \beta+\gamma-\delta-\sigma)^2$.  
Now $\lambda_1$ is clearly always negative.
Next, we have
\begin{align*}
    2\lambda_2 &= \alpha \beta -\gamma - \delta - \sigma - \sqrt{4\beta\sigma+(\alpha \beta+\gamma-\delta-\sigma)^2}\\
    &\leq \alpha \beta -\gamma - \delta - \sigma - (\alpha \beta+\gamma-\delta-\sigma)\\
    &=-2\gamma <0.
\end{align*}
Thus $\lambda_2$ is also always negative, and in fact we have
$$\lambda_2 \leq -\gamma.
$$
%{\color{blue} Need help here!  Mathematica says $\lambda_2$ is always negative, but this isn't obvious.  Should be able to show directly? I've tried to prove it but failed :(}, while Mathematica shows that $\lambda_2$ is also always negative, and
Finally, Mathematica\footnote{All Mathematica and Maple code used in this paper is available upon request.} shows that
$$\lambda_3 \begin{cases}
<0 \qquad \text{if} \quad  \mathfrak R_0<1  \\
=0 \qquad \text{if} \quad    \mathfrak R_0=1\\
>0 \qquad \text{if} \quad    \mathfrak R_0>1. 
 \end{cases}
$$
Thus $(n, 0, 0)$ is stable if and only if $ \mathfrak R_0<1$.  
%CHECK THIS. Checked.  Use Reduce in Mathematica.  Outputs are confusing but all amount to same thing: R_0<1.  See notes from 5/17/21.
\end{proof}

%\ignore{\begin{example}\label{ex: dfe}
%{\color{blue} DELETE THIS EXAMPLE}
%When 
%$$(\alpha, \beta, \gamma, \delta, n, \sigma, %\omega)=(1/10, \,.19, 1/7, 1/14, 100, 1/7, 1/90)$$
%$$
%(\alpha, \beta, \gamma, \delta, n, \sigma, \omega)=
%\Big(\frac{1}{10}, 0.19, \frac{1}{7}, \frac{1}{14}, 100, %\frac{1}{7}, \frac{1}{90} \Big)
%$$
%we have the disease-free equilibrium at $(S,E,I,R)=(100, %0,0, 0)$ 
%and
%$$ \mathfrak R_0 \approx 0.975.$$
%The eigenvalues of $N$ are
%$$\lambda_1 \approx -.336, \quad \lambda_2 \approx %-.011, \quad  \lambda_3 \approx -.002 .$$
%The  three eigenvalues are negative real, so the %equilibrium is stable.  See Figure \ref{fig: dfe}.
%\begin{figure}[H]
%\centering
%\includegraphics[width=7.5cm]{}
%\includegraphics[width=7.5cm]{}
%\caption{The disease-free equilibrium of Example %\ref{ex: dfe}. Plots as in Figure \ref{fig: %endemic}.}\label{fig: dfe}
%\end{figure}
%\end{example}}

%\newpage
\subsection{SVE(R)IRS model}\label{sec: new SVE(R)IRS}
Since vaccines have not been available in previous pandemics, standard compartmental epidemiological models do not take vaccinated individuals into account. Consequently, controlling a pandemic had to be done solely via non-pharmaceutical mitigation measures. This changed during the COVID-19 pandemic, as effective vaccines were produced early on. Adding a vaccinated compartment to the model in Section \ref{sec: new SE(R)IRS} yields the following model, which we denote SVE(R)IRS:
$$
\xymatrix{V \ar@/^/[d]^\psi \ar@/^/[rrd]^{\rho \beta (I+\alpha E)/n}
 & & & & \\
S  \ar[rr]^{\beta (I+\alpha E)/n}   \ar@/^/[u]^\phi &  & E\ar[r]^{\sigma} \ar@/^2pc/[rr]^{\delta} &  I\ar[r]^{\gamma} & R \ar@/^1pc/[llll]^{\omega}
}
$$
Here $1-\rho$ represents the efficacy of the vaccine, $1/\psi$ is the duration of efficacy of the vaccine,  and $1/\phi$ is the rate at which people are vaccinated.  The first two are intrinsic to the vaccine itself, while $\phi$ can be thought of as a control (see Section \ref{sec: control}).  We may assume $\rho \in [0,1]$ and $\phi, \psi>0$.

%\subsection{The model}
The associated dynamics are given by:
\begin{align}
\frac{d S}{dt} &= -\beta S (I+\alpha E)/n + \omega R -\phi S + \psi V\\
\frac{d E}{dt} &=  \beta S (I+\alpha E)/n - (\sigma + \delta) E + \rho \beta V(I+\alpha E)/n \\
\frac{d I}{dt} &= \sigma E - \gamma I \\
\frac{d R}{dt} &= \delta E + \gamma I - \omega R \\
\frac{d V}{dt} &=- \rho \beta V(I+\alpha E)/n +  \phi S -  \psi V.
\end{align}

The dynamics of this model are significantly more complicated than those of the SE(R)IRS model in the previous section.
%We do manage to prove the analogue of Lemma \ref{lemma: new disease free} holds. %with the assumption of an additional inequality, which we suspect can be removed.  
%We were unable to prove the analogues of Lemmas \ref{lemma: new endemic relevance}  and \ref{lemma: new endemic stability}, although experimental evidence suggests that both hold.  
We prove the analogues of Lemmas \ref{lemma: new endemic relevance} and \ref{lemma: new disease free}.  We were unable to prove the analogue of Lemma \ref{lemma: new endemic stability}, although experimental evidence suggests that it does hold.

\begin{proposition} The SVE(R)IRS basic reproductive number is 
$$\mathfrak R_0 = \left(\frac{\beta}{\gamma} \right) \left(\frac{\alpha \gamma +\sigma}{\sigma + \delta}\right) \left(\frac{\psi+\rho \phi}{\psi + \phi} \right).$$
\end{proposition}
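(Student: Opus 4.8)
The plan is to follow \cite{Heffernan} once more and compute $\mathfrak R_0$ as the spectral radius of the next generation matrix, exactly as in the proof of the SE(R)IRS case. The first task is to locate the disease-free equilibrium of the five-compartment system. Setting $E=I=R=0$ collapses the $S$- and $V$-equations to the single relation $\phi S=\psi V$; together with the conservation law $S+V=n$ this forces the disease-free equilibrium to be
$$(S,V,E,I,R)=\left(\frac{\psi n}{\psi+\phi},\ \frac{\phi n}{\psi+\phi},\ 0,\ 0,\ 0\right).$$

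Next I would separate the infected compartments $E$ and $I$ and split their dynamics into new-infection and transfer parts. The only new-infection flux is the one entering $E$, namely $\beta S(I+\alpha E)/n+\rho\beta V(I+\alpha E)/n$; the matching term $-\rho\beta V(I+\alpha E)/n$ sits in $\dot V$, but $V$ is an uninfected compartment in the sense of \cite{Heffernan}, so it contributes nothing to $\mathcal F$. Thus one takes
$$\mathcal F=\begin{pmatrix}\beta(S+\rho V)(I+\alpha E)/n\\ 0\end{pmatrix},\qquad \mathcal V=\begin{pmatrix}(\sigma+\delta)E\\ -\sigma E+\gamma I\end{pmatrix}.$$
Linearizing at the disease-free equilibrium and abbreviating $k:=\dfrac{\psi+\rho\phi}{\psi+\phi}$ (the value of $(S+\rho V)/n$ there) gives
$$F=\begin{pmatrix}\alpha\beta k & \beta k\\ 0 & 0\end{pmatrix},\qquad V=\begin{pmatrix}\sigma+\delta & 0\\ -\sigma & \gamma\end{pmatrix},$$
which is precisely the SE(R)IRS data with $\alpha\beta$ and $\beta$ each scaled by $k$.

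From here the calculation is identical to the previous proposition: inverting $V$, forming $FV^{-1}$, and reading off its unique nonzero eigenvalue yields $\dfrac{\beta k(\alpha\gamma+\sigma)}{\gamma(\sigma+\delta)}$, and substituting the value of $k$ produces the stated formula. I expect the only real obstacle to be the opening step — correctly identifying the disease-free equilibrium of the enlarged system and, more delicately, recognizing that the $V\to E$ infection flux belongs in $\mathcal F$ while its mirror image in $\dot V$ does not, since $V$ is not an infected compartment. As a sanity check, setting $\phi=0$ or $\rho=1$ makes $k=1$ and recovers the SE(R)IRS reproductive number, as it must.
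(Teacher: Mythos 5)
Your proposal is correct and follows essentially the same route as the paper: identifying the disease-free equilibrium $\left(\frac{\psi n}{\phi+\psi},0,0,0,\frac{\phi n}{\phi+\psi}\right)$ and computing the spectral radius of $FV^{-1}$, where your $F$ and $V$ coincide with the paper's $\tilde F$ and $\tilde V$ (your $k$ is just the factor $\frac{\psi+\rho\phi}{\psi+\phi}$ written out). The added remarks on why the $V\to E$ flux belongs in $\mathcal F$ and the sanity checks at $\phi=0$, $\rho=1$ are sound but do not change the argument.
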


\begin{proof}
Again we follow \cite{Heffernan} by computing $\mathfrak R_0$ as the spectral radius of the next generation matrix.    We use tildes to not confuse with the compartment $V$.   First, a short computation shows that the system has a disease-free equilibrium at 
$p_1= (S, E, I, R, V)= \left(\frac{\psi n}{\phi+\psi}, 0, 0, 0,  \frac{\phi n}{\phi+\psi}\right).
$
We compute 
$$
\tilde F=\begin{pmatrix}
\frac{\alpha \beta}{\phi + \psi}(\psi + \rho \phi)  &  \frac{\beta}{\phi + \psi}(\psi + \rho \phi)\\ 0 & 0 
\end{pmatrix}
\qquad \text{and} \qquad 
\tilde V=\begin{pmatrix}
\sigma + \delta & 0 \\ -\sigma & \gamma 
\end{pmatrix}
$$
so our next generation matrix is 
$$
\tilde F \tilde V^{-1}=\begin{pmatrix}
\frac{\beta (\alpha \gamma +\sigma) (\psi + \rho \phi)}{\gamma(\psi + \phi)(\delta+\sigma)} &\frac{\beta  (\psi + \rho \phi)}{\gamma(\psi + \phi)}
 \\ 0 & 0 
\end{pmatrix}.
$$
The basic reproductive number is the spectral radius of this operator, which is the largest eigenvalue: $ \left(\frac{\beta}{\gamma} \right) \left(\frac{\alpha \gamma +\sigma}{\sigma + \delta}\right) \left(\frac{\psi+\rho \phi}{\psi + \phi} \right).$
\end{proof}

Here again, if we use the fact that $n=S+E+I+R+V$ is constant we can reduce the system and work with a 4-dimensional system in $S,E,I, V$.  The reduced equations are given by
\begin{align}
\frac{d S}{dt} &= -\beta S (I+\alpha E)/n + \omega (n-S-E-I-V) -\phi S + \psi V \label{eq: vax S dot}\\
\frac{d E}{dt} &=  \beta S (I+\alpha E)/n - (\sigma + \delta) E + \rho \beta V(I+\alpha E)/n \\
\frac{d I}{dt} &= \sigma E - \gamma I \\
\frac{d V}{dt} &=- \rho \beta V(I+\alpha E)/n +  \phi S -  \psi V. \label{eq: vax V dot}
\end{align}
In the sequel we will study this simpler version of the system, recovering the value of $R$ when convenient.

\subsubsection{Analysis of endemic equilibria} \label{subsec: endemic vax}

%Mathematica gives two endemic equilibria, $p_2$ and $p_3$, which are square root conjugate to each other. One of them is given in Appendix A.
%These are clearly very complicated, and currently both existence and uniqueness are unclear.  Examples suggest that for realistic COVID-19 parameters, $p_2$ has some negative components and can be ignored, while $p_3$ has all positive components and should be treated as a realistic endemic equilibrium.  

The SVE(R)IRS system has three equilibria.  One of them, denoted $p_1$, is the disease-free equilibrium that will be analyzed in Section \ref{subsec: new DFE vax}.  The other two, denoted $p_2, p_3$, correspond to endemic equilibria and are square-root conjugate to each other.  However, $p_2$ has negative components and is thus biologically irrelevant, while $p_3$ has all components positive and thus represents a true endemic equilibrium.  Formally, we have the following result, analogous to Lemma \ref{lemma: new endemic relevance}, whose proof appears in the \hyperref[appendix endeq]{Appendix}.

\begin{proposition}\label{prop: endeq vax}
If $\mathfrak R_0>1$, then there exists a unique endemic equilibrium point for the SVE(R)IRS system. %If $\mathfrak R_0<1$ then no endemic equilibrium exists.
\end{proposition}

The linearization of our system at $p_3$ is unwieldy; Mathematica cannot even determine when the determinant vanishes, let alone compute eigenvalues.  It can, however, produce the characteristic polynomial, so the methods of \cite{Fuller} applied in the proof of Lemma \ref{lemma: new endemic stability} could potentially work.  However, the characteristic polynomial is a complex expression and it is hard to tell whether the coefficients are positive; something similar is expected for $G$, the bialternate product of this matrix with itself.  
%Another potential method of proof appears in \cite{Li2}, but we did not pursue this approach.
Therefore the stability of $p_3$ remains open.
For now we limit ourselves to examples, which provide hope that the analogues of Lemma \ref{lemma: new endemic stability} may indeed hold for the SVE(R)IRS model.

\begin{comment}
\begin{figure}[H]
\includegraphics[width=14cm]{new mess.png}
\caption{Mathematica output showing the ostensible endemic equilibrium $p_2^T$, where we denote the radicand by $Y$ for simplicity.  The second endemic equilibrium $p_3$, which appears realistic for covid parameters, is the square root conjugate of $p_2$. TYPE THIS UP!!}\label{fig: new mess}
\end{figure}
\end{comment}

\begin{example}\label{ex: vax no endemic}
When 
$$(\alpha, \beta, \gamma, \delta, n, \sigma, \omega, \phi, \psi, \rho)=
\Big(\frac{1}{10}, \frac{1}{5}, \frac{1}{7}, \frac{1}{14}, 100, \frac{1}{7}, \frac{1}{90}, \frac{1}{360}, \frac{1}{180}, \frac{1}{10} \Big)
$$
we have $\mathfrak{R}_0 \approx 0.719$ and we
find that neither $p_2$ nor $p_3$ contains all positive coordinates.  So there is no relevant endemic equilibrium for these parameters.  Section \ref{subsec: new DFE vax} shows that there is in fact a stable disease-free equilibrium.

%If we free $\phi$ and keep all other parameters the same, we find that $p_3$ has all positive coordinates if and only if $\phi<0.000165$, which unsurprisingly corresponds precisely to those $\phi$ values for which $\mathfrak R_0>1$.
%This suggests that a version of Lemma \ref{lemma: new endemic relevance} may hold for the SVE(R)IRS model, although the computations were too complicated for our previous proof to be extended.
\end{example}

\begin{example}\label{ex: vax endemic}  Here we keep all parameters the same as in Example \ref{ex: vax no endemic} except $\beta$.  When 
$$(\alpha, \beta, \gamma, \delta, n, \sigma, \omega, \phi, \psi, \rho)=
\Big(\frac{1}{10}, \frac{9}{10}, \frac{1}{7}, \frac{1}{14}, 100, \frac{1}{7}, \frac{1}{90}, \frac{1}{360}, \frac{1}{180}, \frac{1}{10} \Big)
$$
we have 
$$ \mathfrak R_0 \approx 3.23$$
and a unique endemic equilibrium at 
$$p_3= (S,E,I,R,V) \approx (21,3,3,66,7).
$$
%The point $p_2$ contains negative components and is thus irrelevant.

The eigenvalues of the reduced linearization at $p_3$ are
$$\lambda_1 \approx -.345, \quad \lambda_2 \approx -.009, \quad  \lambda_3 \approx -.020 + .053i, \quad  \lambda_4 \approx -.020 - .053i .$$
The four eigenvalues all have negative real parts, so the equilibrium is stable.  Note, however, that they are not all real, in contrast to the disease-free equilibrium case (see Proposition \ref{prop: new disease free vax} below).  We again see the appearance of a spiral sink due to epidemic waves (\cite{Bjornstad2}).  See Figure \ref{fig: vax endemic}.  
\begin{figure}[ht]
\centering
\includegraphics[width=7cm]{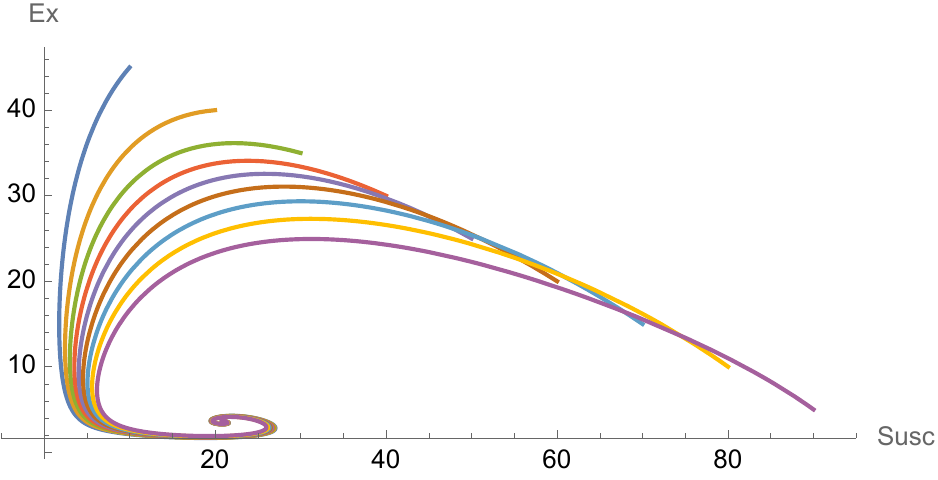}
\includegraphics[width=7cm]{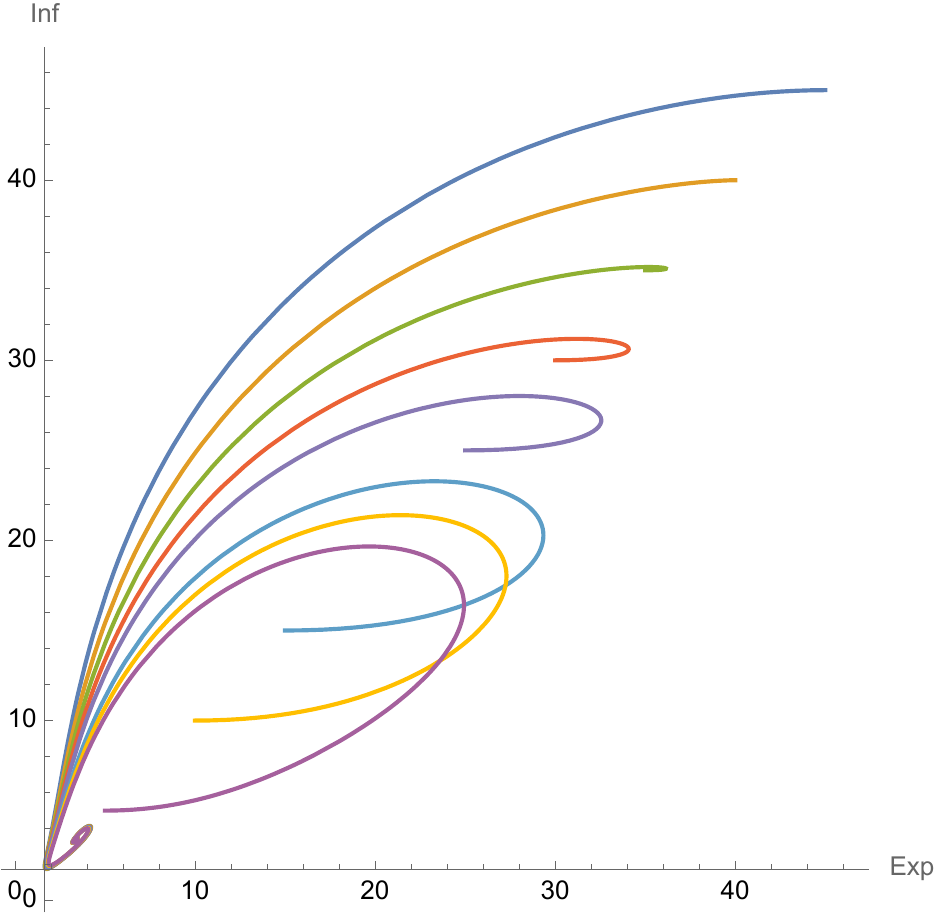} 
\caption{The endemic equilibrium of Example \ref{ex: vax endemic}. Plots as in Figure \ref{fig: endemic}.}\label{fig: vax endemic}
\end{figure}
\end{example}

\begin{example}\label{ex: varying beta}
Again we keep the parameter values as in Example \ref{ex: vax no endemic} but here we fix the initial condition $(S,E,I,R,V)=(90,5,5,0,0)$ and vary $\beta$ over time.  We begin with $\beta=0.5 (\mathfrak R_0 \approx 1.8)$ and run the dynamics until we approach the corresponding endemic equilibrium.  We then bump $\beta$ up to 0.75 and repeat, then finally bump $\beta$ up to 1.785 and repeat again.  See Figure \ref{fig: vax endemic varying beta}. This aligns with the observations in \cite{Chyba,Kunwar} and the real world data displayed in Figure \ref{fig:honolululoop}.
\begin{figure}[ht]
\centering
\includegraphics[width=7cm]{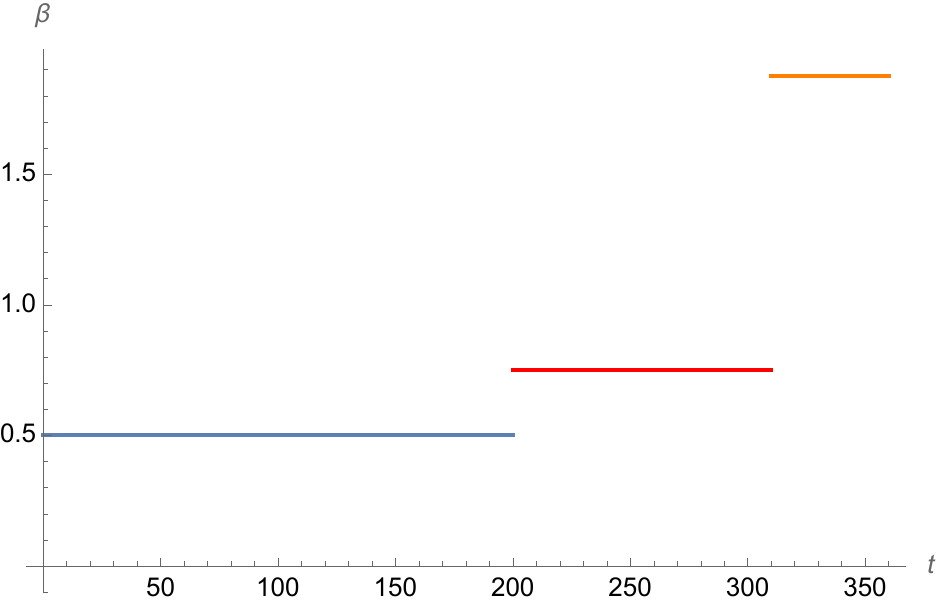}
\includegraphics[width=7cm]{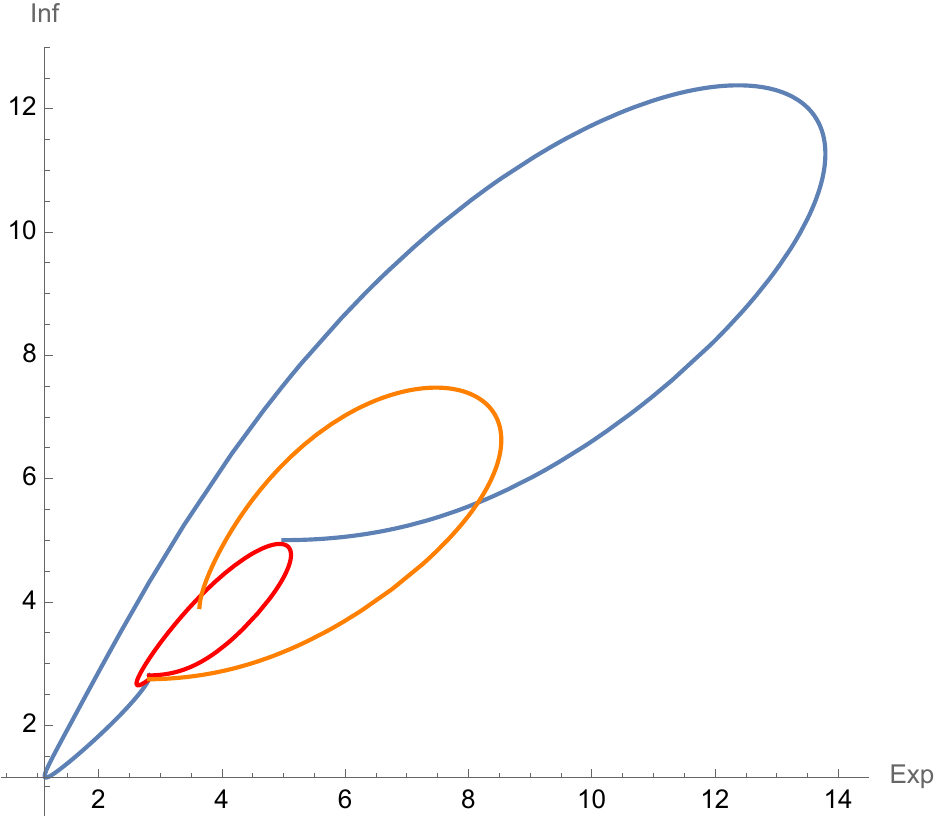}
\caption{{Left: Plot of sample $\beta$ varying over time.  Right: Plot of $E$ versus $I$ compartments with fixed initial condition and $\beta$ varying as in left plot.  Compare
with Figure \ref{fig:honolululoop}.} \\ 
}\label{fig: vax endemic varying beta}
\end{figure}

Figure \ref{fig:honolululoop} is obtained from the compartmental model presented in \cite{Chyba,Kunwar} and applied to the spread of COVID-19 in Honolulu County, Hawaii. The loops are due to changes in $\beta$ done through non pharmaceutical mitigation measures such as lockdown at the top and either relaxing mitigation measures or the appearance of new variants at the bottom. The black, blue, green and yellow curves are all aligned, and all correspond to the original strain of COVID-19. The red curve comes from the Delta surge that took place in Summer 2021, and the beginning of the Omicron surge can be seen in purple. The slope of the red curve differs from the one for the prior curves which is due to characteristics of the new variant and the fact that for a realistic system vaccinated individuals have a different probability to develop symptoms. Note that vaccination started in Hawaii in December 2020 which is during the green loop. For each variant the ratio of symptomatic versus asymptomatic was different which can be observed in the slope of the various loops.  
\begin{figure}[ht]
\centering
\includegraphics[width=16cm]{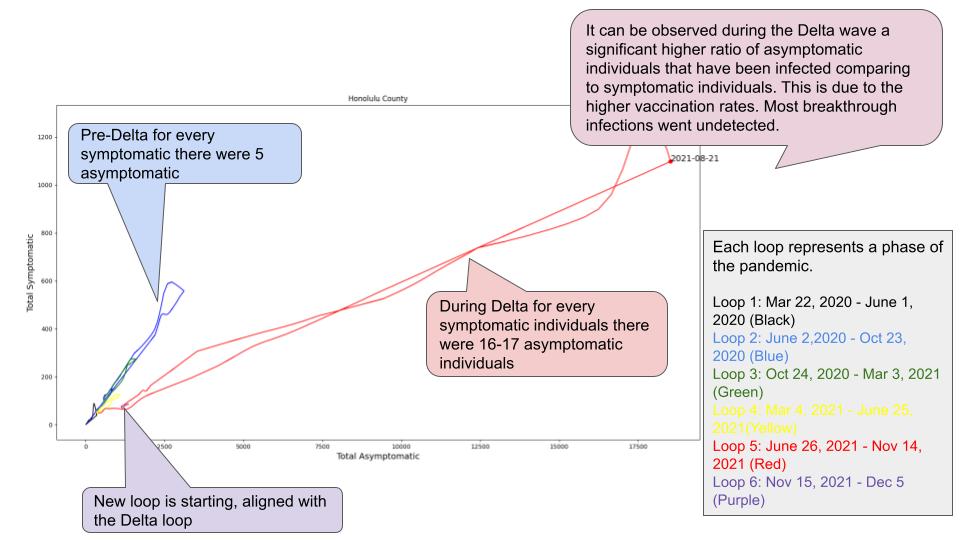}
\includegraphics[width=16cm]{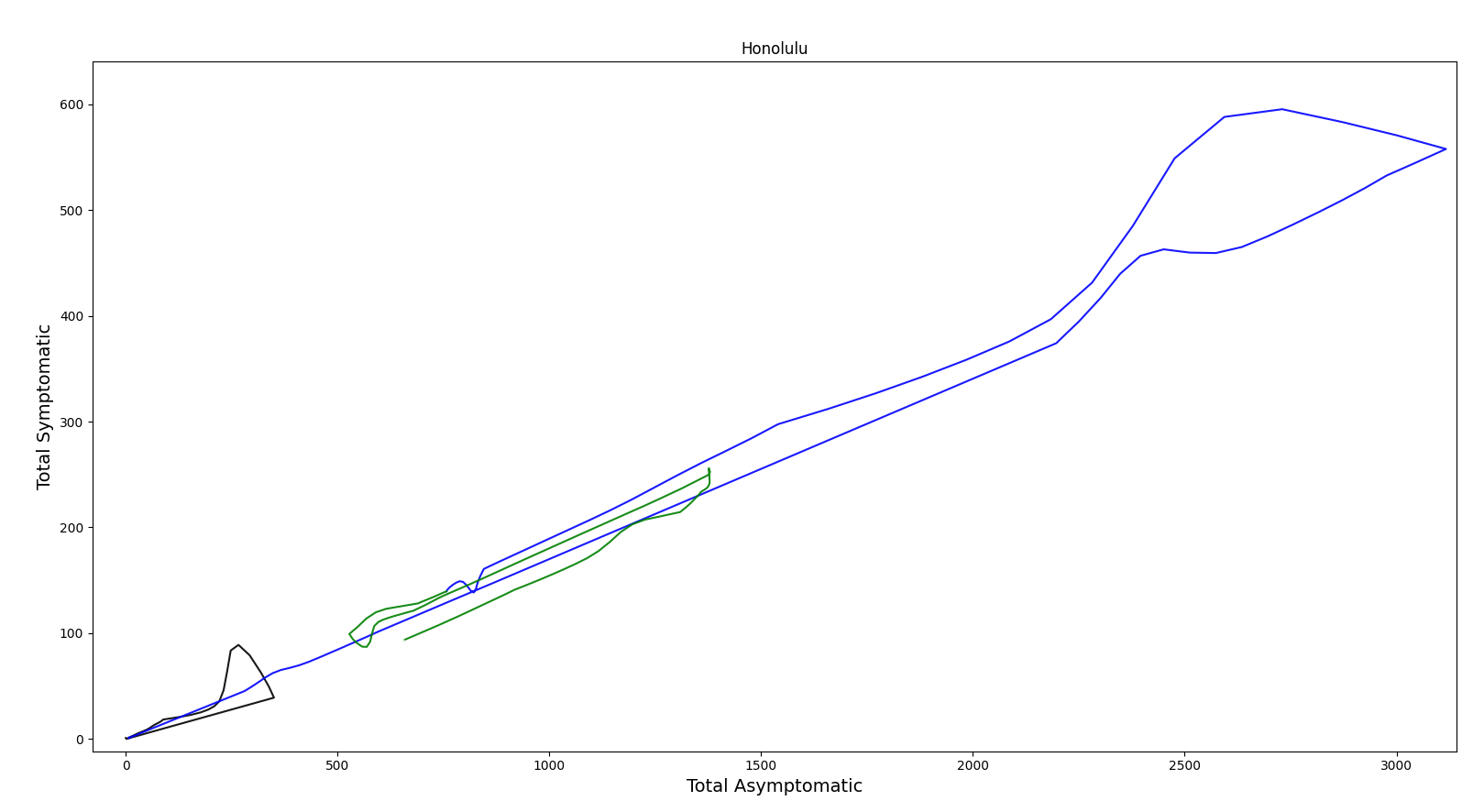}
\caption{I versus E compartments for Honolulu County throughout the COVID-19 pandemic. Bottom picture is a zoom on the period March 22, 2020 to March 3, 2021 without the Delta surge.}\label{fig:honolululoop}
\end{figure}
\end{example}

\bigskip

\subsubsection{Analysis of disease-free equilibrium}\label{subsec: new DFE vax}
Setting $E=I=0$ %{\color{red} Should not we say that we impose $E=I=0$ and therefore $R=0$? rather than suggests we impose all three?} 
and solving the resulting system yields the unique disease-free equilibrium
$$ p_1=(S, E, I, R, V)= \left(\frac{\psi n}{\phi+\psi}, 0, 0, 0,  \frac{\phi n}{\phi+\psi}\right).
$$
Linearizing at this point yields the matrix
$$ N=
\begin{pmatrix}
 -\phi-\omega & \frac{\alpha \beta \psi}{\phi+\psi}- \omega & - \frac{\beta \psi}{\phi + \psi} -\omega & \psi-\omega \\
 0 & - \delta - \sigma - \frac{\alpha \beta (\psi+\rho \psi)}{\phi+\psi} & \frac{\beta (\psi + \rho\phi)}{\phi + \psi}  & 0 \\
 0 & \sigma & -\gamma & 0 \\
 \phi & -\frac{\alpha \beta \rho \phi}{\phi + \psi} & -\frac{ \beta \rho \phi}{\phi + \psi} & - \psi
\end{pmatrix}.
$$
As in the SE(R)IRS model, this matrix is singular if and only if $\mathfrak R_0 =1$.  

\begin{proposition}
\label{prop: new disease free vax}
%Suppose 
%\begin{equation} \label{ineq}
%    \alpha \beta < \gamma + \delta + \sigma.
%\end{equation}
The disease-free equilibrium is locally asymptotically  stable if and only if $\mathfrak R_0<1$.
\end{proposition}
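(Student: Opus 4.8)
The plan is to exploit a block-triangular structure of the linearization $N$. Reorder the reduced coordinates as $(E,I,S,V)$. Since $I+\alpha E$ vanishes at $p_1$, the partial derivatives of both transmission terms $\beta S(I+\alpha E)/n$ and $\rho\beta V(I+\alpha E)/n$ with respect to $S$ and $V$ vanish there; hence the $E$- and $I$-rows of $N$ carry zeros in the $S$- and $V$-columns. Thus $N$ is block lower triangular with diagonal blocks
$$ A=\begin{pmatrix} -\delta-\sigma+\dfrac{\alpha\beta(\psi+\rho\phi)}{\phi+\psi} & \dfrac{\beta(\psi+\rho\phi)}{\phi+\psi}\\[2mm] \sigma & -\gamma\end{pmatrix},\qquad C=\begin{pmatrix} -\phi-\omega & \psi-\omega\\ \phi & -\psi\end{pmatrix}, $$
so that $\det(N-\lambda I)=\det(A-\lambda I)\det(C-\lambda I)$ and $\operatorname{spec}(N)=\operatorname{spec}(A)\cup\operatorname{spec}(C)$. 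It then suffices to apply the elementary Routh--Hurwitz test (negative trace, positive determinant) to two $2\times2$ matrices.

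The block $C$ involves only $\phi,\psi,\omega$: one computes $\operatorname{tr}C=-(\phi+\psi+\omega)<0$, $\det C=\omega(\phi+\psi)>0$, and its discriminant is $(\phi+\psi-\omega)^2\ge0$, so its eigenvalues are the negative reals $-\omega$ and $-(\phi+\psi)$. (As in Lemma \ref{lemma: new disease free}, one uses the standing assumption $\omega>0$; if $\omega=0$ then $C$ is singular and $p_1$ cannot be asymptotically stable for any value of $\mathfrak R_0$.) Hence the disease-free stability is governed entirely by $A$.

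Write $\kappa:=\frac{\psi+\rho\phi}{\phi+\psi}\in(0,1]$, so that $\mathfrak R_0=\frac{\beta(\alpha\gamma+\sigma)}{\gamma(\sigma+\delta)}\,\kappa$. A short computation gives
$$ \det A=\gamma(\delta+\sigma)-\beta\kappa(\alpha\gamma+\sigma)=\gamma(\delta+\sigma)\bigl(1-\mathfrak R_0\bigr), $$
while the discriminant of $A$ equals $(\alpha\beta\kappa+\gamma-\delta-\sigma)^2+4\beta\kappa\sigma\ge0$, so $A$ also has real eigenvalues (mirroring the SE(R)IRS case in Lemma \ref{lemma: new disease free}, which is recovered when $\kappa=1$). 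If $\mathfrak R_0>1$ then $\det A<0$, so $A$ has a positive eigenvalue and $p_1$ is unstable; if $\mathfrak R_0=1$ then $\det A=0$, so $A$ has eigenvalue $0$ and $p_1$ is not asymptotically stable. If $\mathfrak R_0<1$ then $\det A>0$, and moreover the inequality $\beta\kappa(\alpha\gamma+\sigma)<\gamma(\delta+\sigma)$ forces $\alpha\beta\kappa<\delta+\sigma$, whence $\operatorname{tr}A=\alpha\beta\kappa-\gamma-\delta-\sigma<-\gamma<0$. Thus both eigenvalues of $A$ are negative, and together with those of $C$ this makes $p_1$ locally asymptotically stable, completing the proof.

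The only step beyond bookkeeping is the sign of $\operatorname{tr}A$ in the case $\mathfrak R_0<1$: unlike $\operatorname{tr}C$, it is not manifestly negative (the term $+\alpha\beta\kappa$ could dominate), and its negativity must be extracted from the reproductive-number inequality. This is the sole — and only mild — obstacle; once the block-triangular decomposition is recognized, everything else is routine linear algebra.
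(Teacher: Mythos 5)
Your proof is correct, and it takes a genuinely different route from the paper's. The paper computes all four eigenvalues of the full $4\times 4$ linearization explicitly, verifies via ``large amounts of tedious algebra'' that the radicand $Z$ is a sum of squares (so the spectrum is real), bounds $\lambda_3\le-\gamma$ by hand, and then invokes Mathematica to show $\det N>0$ if and only if $\mathfrak R_0<1$, which pins down the sign of $\lambda_4$. You instead observe that because $I+\alpha E$ vanishes at $p_1$, the $(E,I)$ equations decouple at linear order from the $(S,V)$ equations, so after reordering coordinates $N$ is block lower triangular; the spectrum splits into that of the transmission block $A$ and the demographic/vaccination block $C$, and two $2\times2$ Routh--Hurwitz checks finish the job. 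This buys several things: no computer algebra is needed anywhere, the identity $\det A=\gamma(\delta+\sigma)(1-\mathfrak R_0)$ makes the threshold completely transparent (it is the hand-computed replacement for the paper's Mathematica step), and your block structure is exactly the structure underlying the next-generation-matrix computation of $\mathfrak R_0$ earlier in the section, so the argument explains \emph{why} $\mathfrak R_0$ governs stability rather than just verifying it. Your eigenvalues agree with the paper's: $\operatorname{spec}(C)=\{-\omega,-(\phi+\psi)\}$ are $\lambda_1,\lambda_2$, and your discriminant of $A$ is $Z/(\phi+\psi)^2$, recovering $\lambda_3,\lambda_4$; incidentally your $(2,2)$ entry of $A$ is the one consistent with the paper's eigenvalue formulas, the displayed entry of $N$ in the paper containing an apparent sign/typo ($-\frac{\alpha\beta(\psi+\rho\psi)}{\phi+\psi}$ rather than $+\frac{\alpha\beta(\psi+\rho\phi)}{\phi+\psi}$). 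Two small remarks: your caveat about $\omega>0$ is fine and slightly more careful than the paper, which asserts $-\omega<0$ without comment; and in the borderline case $\mathfrak R_0=1$ a zero eigenvalue of the linearization does not by itself rule out asymptotic stability of the nonlinear system, but the paper's own proof treats this case at exactly the same level of rigor, so you lose nothing relative to the original.
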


\begin{proof}
%Note that (\ref{ineq}) is very realistic for COVID-19 parameters, and includes the case $\alpha=0$, when asymptomatic infected individuals cannot transmit the disease. 

The eigenvalues of $N$  are
\begin{align*}
    \lambda_1 &=-\omega \\
    \lambda_2 &=-\phi-\psi\\
    \lambda_3 &= \frac{1}{2(\phi + \psi)} \big(  \alpha \beta(\psi + \rho \phi) - (\phi + \psi)(\gamma+\delta+\sigma)
    - \sqrt{Z}\big)\\
     \lambda_4 &= \frac{1}{2(\phi + \psi)} \big(  \alpha \beta(\psi + \rho \phi) - (\phi + \psi)(\gamma+\delta+\sigma)
    + \sqrt{Z}\big)
\end{align*}
where large amounts of tedious algebra show that 
$$Z= 4\beta \sigma (\phi + \psi) (\psi + \rho \phi) + \big(\alpha \beta(\psi+ \rho\phi) + (\gamma-\sigma-\delta)(\phi+\psi)  \big)^2.
$$
This form of $Z$ makes it apparent that $Z$ is always positive and thus all four eigenvalues are always real. 

Now it is also clear that $\lambda_1$ and $\lambda_2$ are always negative.
Next, we have
\begin{align*}
    2(\phi+\psi)\lambda_3 &= \alpha \beta (\psi + \rho \phi)  - (\phi + \psi)(\gamma + \delta + \sigma) - \sqrt{Z}\\
 &\leq  \alpha \beta (\psi + \rho \phi)  - (\phi + \psi)(\gamma + \delta + \sigma) - \big(\alpha \beta(\psi+ \rho\phi) + (\gamma-\sigma-\delta)(\phi+\psi)  \big)\\
 &= -2\gamma(\phi + \psi)<0.
\end{align*}
Thus $\lambda_3$ is always negative as well, and in fact we have 
$$\lambda_3 \leq -\gamma.
$$

\begin{comment}
Next, the assumption that $\alpha \beta < \gamma + \delta + \sigma$ forces $\lambda_3<0$ as well. 
To see this, note that since $0<\rho<1$ we have 
$$ \alpha \beta (\psi + \rho \phi) <  \alpha \beta (\psi + \phi) < ( \gamma + \delta + \sigma)(\psi + \phi). 
$$
Thus the term $\alpha \beta(\psi + \rho \phi) - (\phi + \psi)(\gamma+\delta+\sigma)$ in $\lambda_3$ is negative.
\end{comment}

Finally, Mathematica shows that $\det N>0$ if and only if $\mathfrak R_0<1$. Since $\det N = \lambda_1 \lambda_2 \lambda_3 \lambda_4$ and $\lambda_1, \lambda_2, \lambda_3<0$, this shows that $\lambda_4<0$ if and only if $\mathfrak R_0<1$.  This proves the Proposition.
\end{proof}

%{\color{blue} I tried messing with parameters, and for all the examples I tried the Proposition still held even when the inequality failed.  So I suspect that the inequality can be removed, but I can't seem to prove it.}

%\color{blue} Keep this control section or delete??  If keep maybe make pics nicer in matlab.  Either way, update Intro, Discussion, and maybe Abstract.}{\color{red} Yes I removed it and added a subsection in discussion. }

\section{Control of the SVE(R)IRS system}\label{sec: control}

In this section we analyze the SVE(R)IRS model as an affine control system with two controls of the form:
\begin{equation}
    \dot {\hat q}(t)=F_0(\hat{q}(t))+u_1(t)F_1(\hat{q}(t))+u_2(t)F_2(\hat{q}(t)).
\end{equation}
The vaccine can be thought of as a first control over the system to steer the variables to desired values. The second control represents non-pharmaceutical interventions such as lockdown, social distancing, and mask mandates, and can also account for virus mutations (and therefore a change in transmission rate) that impact the parameter $\beta$. Introducing $\hat q=(S,E,I,R,V)^t$, we have:

\begin{equation}\label{SEIRV-Control-2}
\begin{aligned}
\dot {\hat q} &= \begin{pmatrix}
 \omega (n-S-E-I-V)  + \psi V\\
  - (\sigma + \delta) E \\
 \sigma E - \gamma I \\
  \delta E + \gamma I - \omega R \\
   -  \psi V
\end{pmatrix}+u_1\begin{pmatrix}
-S \\ 0 \\ 0 \\ 0 \\ S
\end{pmatrix}\\
&+u_2\begin{pmatrix}
- S (I +\alpha E)/n \\  (S + \rho V)(I +\alpha E)/n \\ 0 \\ 0 \\ - \rho V(I +\alpha E)/n
\end{pmatrix},
\end{aligned}
\end{equation}
where $u_1(t)=\phi(t)$ represents the vaccination rate and $u_2(t)=\beta(t)$ represents the transmission rate.

\subsection{Static feedback linearization}\label{subsec: SFL}
Given a control system, it may be possible to change the state and control variables in such a way that the new system is a linear control system. Historically, the first result in this direction is a theorem of Brunovsk\'y \cite{Brunovsky} which says that \text{all} controllable linear control systems may be put into a standard form, which is now famously known as \textit{Brunovsk\'y normal form}. In differential geometry language, such normal forms are essentially contact distributions of mixed order, otherwise known as generalized Goursat bundles \cite{VassiliouGoursat}\cite{VassiliouGoursatEfficient}. 

\begin{definition}\label{SFL}
A control system 
\begin{equation*}
\frac{d\textbf{x}}{dt}=\textbf{f}(\textbf{x},\textbf{u})
\end{equation*} 
is called \textbf{static feedback linearizable} (SFL) if there exists an invertible map $(t,\textbf{z},\textbf{v})=(t,\varphi(\textbf{x}),\psi(\textbf{x},\textbf{u}))$ such that the given control system transforms to a Brunovsk\'y normal form 
\begin{equation*}
\frac{d\textbf{z}}{dt}=A\textbf{z}+B\textbf{v}.
\end{equation*}
\end{definition}
The first results concerning when a given \textit{nonlinear} control system is SFL were given by Krener in \cite{KrenerLin}, as well as by Brockett in \cite{BrockettLin}, and then Jakubczyk and Respondek in \cite{RespondekLin}. Constructing explicit maps for SFL systems is harder, and that work was started by Hunt, Su, and Meyer in \cite{HuntSuMeyerLin}, and then a more geometric approach based on symmetry was developed in \cite{BrunovskySymmetry} by Gardner, Shadwick, and Wilkens, and finally work of Gardner and Shadwick \cite{GSalgorithm},\cite{GSFeedback}, and \cite{GSalgorithmExample} provided what is now known as the GS algorithm for static feedback linearization. The work of Vassiliou in \cite{VassiliouGoursat} and \cite{VassiliouGoursatEfficient} provides both a test for determining equivalence of generalized Goursat bundles and gives a procedure to construct appropriate diffeomorphisms.

The bi-input control system (\ref{SEIRV-Control-2}) 
fails to be controllable since the constant population requirement constrains trajectories to a hypersurface in the state-space; however, as mentioned before, we may reduce by the total population requirement. Indeed, this yields
\begin{equation}\label{SEIV-Control}
\begin{aligned}
\dot q &= \begin{pmatrix}
 \omega (n-S-E-I-V)  + \psi V\\
 - (\sigma + \delta) E  \\
 \sigma E - \gamma I \\
  -  \psi V
\end{pmatrix}
+u_1\begin{pmatrix}
-S \\ 0 \\ 0 \\ S
\end{pmatrix}\\
&+ u_2\begin{pmatrix}
-S (I +\alpha E)/n \\  (S + \rho V)(I +\alpha E)/n \\ 0 \\  - \rho V(I +\alpha E)/n
\end{pmatrix}
\end{aligned}
\end{equation}
where $q=(S,E,I,V)^t$. 

\begin{theorem}
\label{thm-SFL}
The control system (\ref{SEIV-Control}) is controllable and SFL with Brunovsk\'y normal form given by
\begin{equation*}%\label{SEIV-Brun}
\dot{z}=A\,z+v_1\,b_1+v_2\,b_2,
\end{equation*}
where 
\begin{equation*}
    A=\begin{pmatrix} 0 & 1 & 0 & 0\\ 0 & 0 & 1 & 0\\ 0 & 0 & 0 & 0\\ 0 & 0 & 0 & 0\\ \end{pmatrix},\quad b_1=\begin{pmatrix} 0 \\ 0 \\ 1 \\ 0 \end{pmatrix}, \quad b_2=\begin{pmatrix} 0 \\ 0 \\ 0 \\ 1 \end{pmatrix},
\end{equation*}
and $z=(z_0,z_1,z_2,w_0)^t$ with the new controls $v_1$ and $v_2$. 
\end{theorem}

\begin{proof}
It is easily checked that the control system (\ref{SEIV-Control}) is bracket generating and hence controllable. While the authors implemented the procedure in \cite{VassiliouGoursatEfficient} via MAPLE to determine the linearizing map $\Phi$ below, one could determine the map by careful inspection after noting that the non-drift part of equation (\ref{SEIV-Control}) is linear in the state variables and control-affine in the control terms. It is straightforward to check this linearizing map by using the `Transformation', `Pushforward'/`Pullback' commands in the differential geometry package in MAPLE \cite{MAPLEDG}.
The feedback transformation $(z,v)=\Phi(q,u)$ is given by 
%\begin{equation}\label{Phi}
%\begin{aligned}
\begin{align*}
z_0&=n \left(  \left( S+E+I+V \right) \sigma+I\,\delta \right),\\
z_1&=-n\omega\,\sigma\,S-n\omega\,\sigma\,E- \left(  \left( 
\omega+\gamma \right) \sigma+\gamma\delta \right) nI\\
\,&+{n}^{2}\omega\,
\sigma-n\omega\,\sigma\,V,\\
z_2&=n{\omega}^{2}\sigma\,S+ \left( -\gamma{\sigma}^{2}+ \left( -\gamma\delta+
\delta\,\omega+{\omega}^{2} \right) \sigma \right) nE\\
\,&+ \left( 
 \left( \gamma^{2}+\gamma\omega+{\omega}^{2} \right) \sigma+\gamma^{2}\delta
 \right) nI-{n}^{2}{\omega}^{2}\sigma+n{\omega}^{2}\sigma\,V,\\
w_0&=S,\\
v_1&=\left( -\alpha\gamma{\sigma}^{2}+ \left( -\alpha\gamma\delta+\alpha\,\delta
\,\omega \right) \sigma \right) S\,E\,u_2\\
\,&+ \left( -\gamma{\sigma}^{2}+ \left( -\gamma\delta+\delta\,\omega \right) \sigma \right) u_2\,S\,I-n{\omega}^{3}\sigma\,S\\
\,&+ \left( -
\alpha\,\rho\gamma{\sigma}^{2}+ \left( -\alpha\gamma\delta\,\rho+\alpha\,
\delta\,\omega\,\rho \right) \sigma \right) E\,V\,u_2\\
\,&+ ( \gamma n{\sigma}^{3}+ \left( n\gamma^{2}+2\, \left( \delta+\omega/2
 \right) n\gamma-\delta\,\omega\,n \right) {\sigma}^{2}\\
 \,&+ \left( n\delta\,{\gamma
}^{2}+{\delta}^{2}n\gamma- \left( {\delta}^{2}+\delta\,\omega+{\omega}^{2}
 \right) n\omega \right) \sigma ) E\\
 \,&+ \left( -\gamma\rho\,{
\sigma}^{2}+ \left( -\gamma\delta\,\rho+\delta\,\omega\,\rho \right) \sigma
 \right) V\,I\,u_2\\
 \,&+ \left(  \left( -\gamma^{3}n-n\gamma^{
2}\omega-\gamma n{\omega}^{2}-n{\omega}^{3} \right) \sigma-\gamma^{3}\delta\,n
 \right) I\\
 \,&+{n}^{2}{\omega}^{3}\sigma-n{\omega}^{3}\sigma\,V,\\
v_2&=-\frac {u_2\,S(\alpha\,E+I)}{n}-(\omega+u_1)S-\omega(E+I)\\
\,&+ \left( -\omega+\psi \right) V+\omega
\,n.
%\end{aligned}
%\end{equation}
\end{align*}
The inverse map is given by
\begin{equation}\label{Phi Inv}
\begin{aligned}
S(z)&=w_0,\\
E(z)&=\frac{-\gamma\omega z_0 -(\gamma+\omega)z_1-z_2+\sigma n^2 \omega\gamma}{nl},\\
I(z)&=\frac{-\omega z_0-z_1+n^2\omega\sigma}{l},\\
V(z)&=\frac{1}{nl}(((\omega+\gamma)\sigma+\gamma(\omega+\delta))z_0+(\sigma+\gamma+\delta+\omega)z_1+z_2,\\
\,&+n\sigma(\delta\omega-\gamma\delta-\gamma\sigma)w_0-n^2\sigma\omega(\sigma+\gamma+\delta)),\\
u_1(z,v)&=\frac{1}{nS(z)}(-u_2(z,v)S(z)(\alpha E(z)+I(z))+n\psi V(z)+n\omega R(z)-n v_2),\\
u_2(z,v)&=\frac{n{\omega}^{3}\sigma R(z)+C_{1,E}E(z)+
C_{1,I}I(z)-v_1
}{l(S(z)+\rho V(z))(\alpha E(z)+I(z))},
\end{aligned}
\end{equation}
where
\begin{align*}
%\begin{equation*}\label{Phi Inv additional}
%\begin{aligned}
l&=\sigma(\sigma\gamma+\delta(\gamma-\omega)),\\
C_{1,E}&=\sigma((\gamma-\omega)\delta+\gamma\sigma)(\omega+\delta+\gamma+\sigma)n,\\
C_{1,I}&=-((\delta+\sigma)\gamma^2+\gamma\omega\sigma+\omega^2\sigma)\gamma n,
%\end{aligned}
%\end{equation*}    
\end{align*}
and
\[
R(z)=n-S(z)-E(z)-I(z)-V(z)
\]
is the recovered population in terms of the $z$ coordinates.
Notice that the controls $u_1$ and $u_2$ as functions of $z$ and $v$ are written via the $(S,E,I,V)$ state variables, and $u_1(z)$ is written also in terms of $u_2(z)$. Explicit expressions in terms of $z$ may be written, but are unnecessary. 
\end{proof}

The domain for the state variables $q$ is $[0,n]^4$; however, we may shrink to a smaller box region $\Omega_q$ to avoid the disease-free equilibrium. Since the map (\ref{Phi Inv}) is linear in the state variables $z$ and must have differential of full rank, then the domain $\Omega_q$ is mapped to a region $\Omega_z$ which is a parallelepiped in the $z$ coordinates. 

We now wish to understand the possible values for the controls $(v_1,v_2)$. In the $(S,E,I,V)$ space, the controls $(u_1,u_2)$ take values in the rectangle $\mathcal{U}=[0,\phi_{max}]\times[\beta_{min},\beta_{max}]$, where $0< \beta_{min}<\beta_{max}\leq 1$ and $\phi_{max} \leq 1$. Indeed, this yields Proposition \ref{v bounds prop} which is helpful for analyzing optimal control problems, as it classifies the possible $(v_1,v_2)$ regions by vertex behavior. First, we introduce the following expressions for convenience:
\begin{equation}\label{shorthand eqs}
\begin{aligned}
%\begin{align*}
N_1(z)&=n{\omega}^{3}\sigma R(z)+C_{1,E}E(z)+
C_{1,I}I(z),\\
N_2(z)&=n\omega R(z)+n\psi V(z),\\
N_3(z)&=S(z)(\alpha E(z)+I(z)),\\
D_1(z)&=l(S(z)+\rho V(z))(\alpha E(z)+I(z)),\\
N_4(z)&=\frac{1}{n}\left(N_2(z)-\frac{N_1(z)N_3(z)}{D_1(z)}\right),\\
L(z,v_1)&=N_4(z)+\frac{N_3(z)}{nD_1(z)}v_1,
\end{aligned}
\end{equation}    
%\end{align*}
where it is clear that $N_2(z),N_3(z),D_1(z)>0$ on $\Omega_z$ and $L(z,v_1)$ has positive slope on $\Omega_z$ as a linear function of $v_1$.

The explicit static feedback transformation allows one to take trajectories of the linear control system to trajectories of the original control system. As such, the general problem of trajectory planning is eased by solving the planning problem for the linear control system first, then transforming the resulting trajectory to the original state-space. To this end -- and possible applications to optimal control in future work -- we present Proposition \ref{v bounds prop}, which describes the admissible controls for the linear problem. 

\begin{proposition}\label{v bounds prop}
Let
\begin{equation*}
\begin{aligned}
\mathcal{V}(z)&=\left\{(v_1,v_2)\in \mathbb{R}^2\colon f_{min}(z)\leq v_1 \leq f_{max}(z),\,g_{min}(z,v_1)\leq v_2 \leq g_{max}(z,v_1)\right\}
\end{aligned}
\end{equation*}
where
\begin{equation*}
\begin{aligned}
f_{min}(z)&= N_1(z)-\beta_{max}D_1(z),\\
f_{max}(z)&= N_1(z)-\beta_{min}D_1(z),\\
g_{min}(z,v_1)&= L(z,v_1)-\phi_{max}S(z),\\
g_{max}(z,v_1)&= L(z,v_1),
\end{aligned}
\end{equation*}
and $\Sigma_+\sqcup\Sigma_0\sqcup\Sigma_- = \Omega_z$ where
\begin{align*}
%\begin{equation}
%\begin{aligned}
\Sigma_+&=\left\{z\in\Omega_z\colon \phi_{max}S(z)-\Delta\beta\frac{(\alpha E(z)+I(z))}{n}>0\right\},\\
\Sigma_0&=\left\{z\in\Omega_z\colon \phi_{max}S(z)-\Delta\beta\frac{(\alpha E(z)+I(z))}{n}=0\right\},\\
\Sigma_-&=\left\{z\in\Omega_z\colon \phi_{max}S(z)-\Delta\beta\frac{(\alpha E(z)+I(z))}{n}<0\right\},
%\end{aligned}
%\end{equation}    
\end{align*}
with $\Delta\beta=\beta_{max}-\beta_{min}$. Then $\mathcal{U}$ is mapped into one of three types of parallelograms in $(v_1,v_2)$ space: $\mathcal{P}_+, \mathcal{P}_0$, or $\mathcal{P}_-$ given by
\begin{align*}
%\begin{equation}
%\begin{aligned}
\mathcal{P}_+&=\mathcal{V}(z)\vert_{\Sigma_+},\\
\mathcal{P}_0&=\mathcal{V}(z)\vert_{\Sigma_0},\\
\mathcal{P}_-&=\mathcal{V}(z)\vert_{\Sigma_-}.
%\end{aligned}    
%\end{equation}    
\end{align*}
where `$\vert$' denotes set restriction.
\end{proposition}

\begin{proof}
First, under the backward static feedback map (\ref{Phi Inv}), the $u_1(z,v)$ and $u_2(z,v)$ may be written in terms of the expressions in (\ref{shorthand eqs}) as
\begin{align*}
%\begin{equation}
%\begin{aligned}
u_1(z,v)&=\frac{1}{nS(z)}(-u_2(z,v)N_3(z)+N_2(z)-nv_2),\\
u_2(z,v)&=\frac{N_1(z)-v_1}{D_1(z)}.
%\end{aligned}
%\end{equation}    
\end{align*}
The region $\mathcal{U}$ in combination with the above equations immediately yields the inequalities
\begin{equation*}
N_1(z)-D_1(z)\beta_{max}\leq v_1 \leq N_1(z)-D_1(z)\beta_{min},
\end{equation*}
and 
\begin{equation}\label{v2 bounds 1}
L(z,v_1)-\phi_{max}S(z)\leq v_2\leq L(z,v_1),
\end{equation}
which is precisely the definition of the set $\mathcal{V}(z)$.
Since $N_2(z),N_3(z),$ and $D_1(z)$ are all positive on $\Omega_z$, we have that $L(z,v_1)$ has positive slope as a linear function in $v_1$ and therefore the region $\mathcal{V}(z)$ is a parallelogram for each $z\in\Omega_z$. Moreover, as $z$ changes continuously in $\Omega_z$, each parallelogram is continuously deformed to a new parallelogram with vertical edges and positively sloped $v_2$ bounds. The only major change in shape that is relevant for optimal control purposes is the position of the vertices of each parallelogram as functions of $z$. Let $\{(\zeta^i_z,\eta^i_z)\}_{i=1}^4$ be the vertices of each parallelogram labeled counter-clockwise starting from the highest value of $v_2$. Vertices $\eta^1_z$ and $\eta^3_z$ will remain the largest and smallest values of $v_2$ for each parallelogram for each $z$ because of the positive slope condition on $L(z,v_1)$; however, $\eta^2_z$ and $\eta^4_z$ are not fixed relative to each other as $z$ varies. Thus, there are three possible parallelogram types, $\eta^4_z>\eta^2_z$, $\eta^4_z=\eta^2_i$ and $\eta^4_z<\eta^2_z$. Writing out the expression $\eta^2_z-\eta^4_z$ gives
\begin{align*}
%\begin{equation}
%\begin{aligned}
\eta^2_z-\eta^4_z&=L(z,\zeta^2_z)-L(z,\zeta^4_z)-\phi_{max}S(z),\\
&=\frac{N_3(z)}{nD_1(z)}(\zeta^2_z-\zeta^4_z)-\phi_{max}S(z),\\
&=\Delta\beta\frac{(\alpha E(z)+I(z))}{n}-\phi_{max}S(z).
%\end{aligned}
%\end{equation}    
\end{align*}
Thus, the case of $\eta^4_z-\eta^2_z=0$ corresponds exactly to a hyperplane $\Sigma_0\subset \Omega_z$ and $\Sigma_+$ and $\Sigma_-$ correspond to being on either side of this hyperplane such that $\eta^4_z-\eta^2_z$ is positive or negative, respectively. 
\end{proof}

It is worth explicitly stating that the inequalities defining the regions $\Sigma_{\pm}$ in Proposition \ref{v bounds prop} have an epidemiological interpretation. Namely, if the difference in transmissibility times the total proportion of infectious individuals is greater than, equal to, or less than the maximum possible vaccinated susceptible population at a given time, then the admissible control set is $\mathcal{P}_-$,$\mathcal{P}_0$, or $\mathcal{P}_+$ respectively. We conjecture that the switching times for the solutions of the time-optimal control problem (see subsection \ref{subsec: MP and singular}) for the original control system are determined by these inequalities. 
%\newpage
%\subsection{Strategies for Vaccination}\label{sec: control}

Finally, computations in Mathematica provide the following desirable result.
\begin{proposition}
The static feedback map $\Phi$ takes equilibria to equilibria.  That is, the disease-free equilibrium $p_1$ and the two endemic equilibria $p_2,\ p_3$ in the $S,E,I,R,V$ coordinates are all mapped to the equilibrium plane of the Goursat bundle, which consists of points of the form $(z_0, z_1, z_2, v_1, w_0, v_2)=(\text{constant}_1, 0, 0, 0, \text{constant}_2,0)$.
\end{proposition}

\begin{remark}
Alternatively, one can consider the single-input control system wherein the control is the vaccination rate only and the transmission rate is held fixed. In this case, the control system is also static feedback linearizable; however, despite the static feedback transformation being rational, the expressions are far more cumbersome. One can in principle recover this map from the bi-input system by fixing the control $u_2$ to be constant and then appending the resulting differential equation. The single-input SF map is available upon request. 
\end{remark}

\subsection{Simulations}\label{subsec: simulations}
Here we provide an example of control curves and plot the corresponding trajectories in the original variables.  We then transform both the states and controls according to the feedback transformation of Section \ref{subsec: SFL}.  Here we take $\phi=u_1$ and $\beta =u_2$ and all other parameters are as in Example \ref{ex: vax no endemic}.

\begin{figure}[H]
\centering
\includegraphics[width=5.5cm]{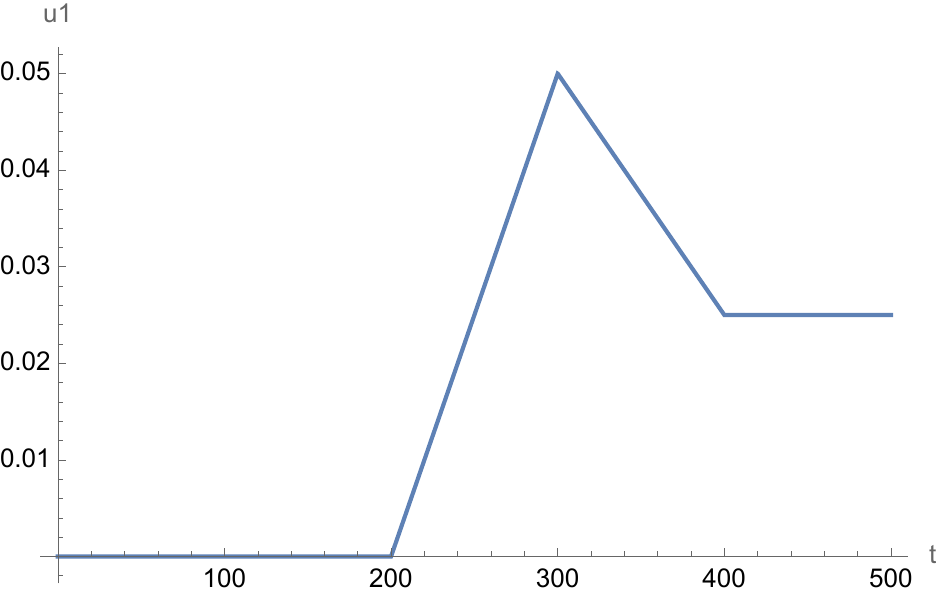}
\includegraphics[width=5.5cm]{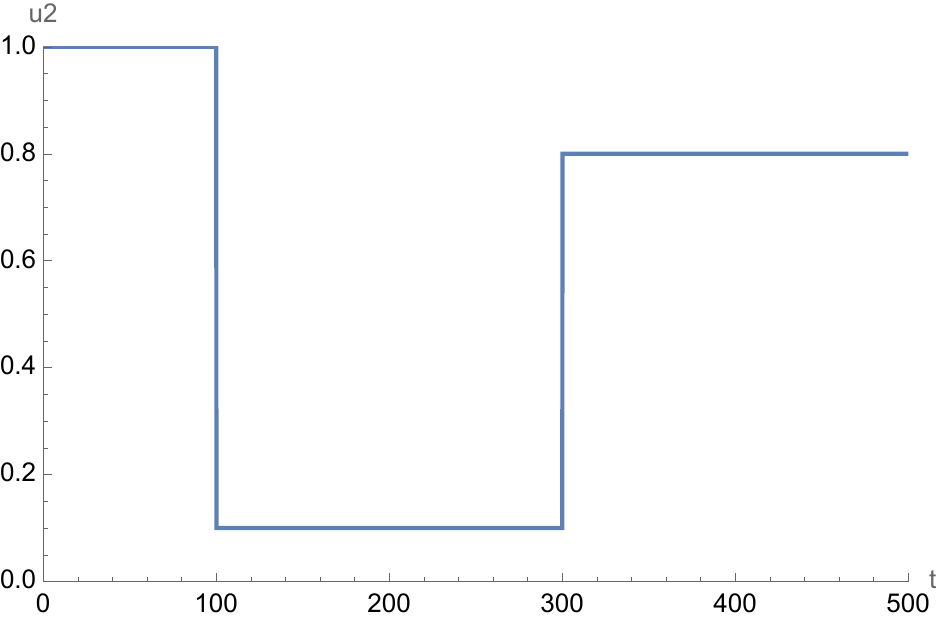}
\caption{Sample control curves $u_1(t)$ and $u_2(t)$.}
\label{fig: controls}
\end{figure}

In this hypothetical scenario, the epidemic begins with no vaccinations and a transmission rate of 1.  At time 100, a lockdown measure forces the transmission rate down to 0.1.  At time 200, vaccination begins and grows linearly, until time 300 when we have $u_1=0.05$.  At this time of peak vaccination, the lockdown ends, and the transmission rate jumps back up to 0.8 (lower than at the start of the epidemic, but much higher than during the lockdown) and stays there until the final time 500.  At time 300 the vaccination rate begins decreasing linearly until time 400, when it stabilizes at 0.025. See Figure \ref{fig: controls}.

The corresponding value of $\mathfrak R_0$ begins very high, then drops below 1 during the lockdown and decreases further during the vaccination campaign, but creeps back above 1 as the lockdown ends and vaccination rates stabilize.  See Figure \ref{fig: states}.  In this figure we also display the five state variables over time for this particular control scenario with the initial condition $(S, E, I, R, V)=(90, 5, 5, 0, 0)$.  

\begin{figure}[H]
\centering
\includegraphics[width=5.5cm]{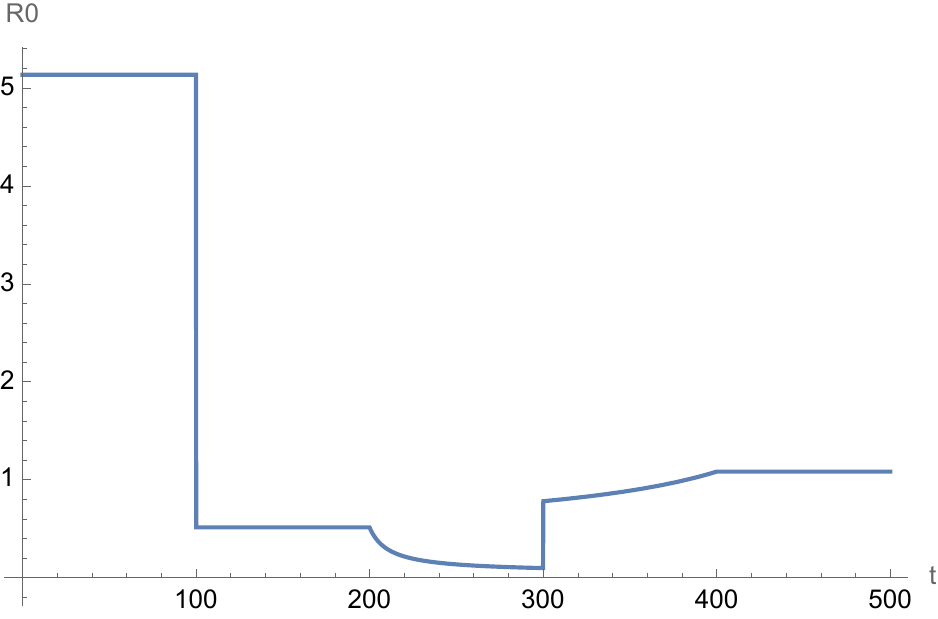}
\includegraphics[width=5.5cm]{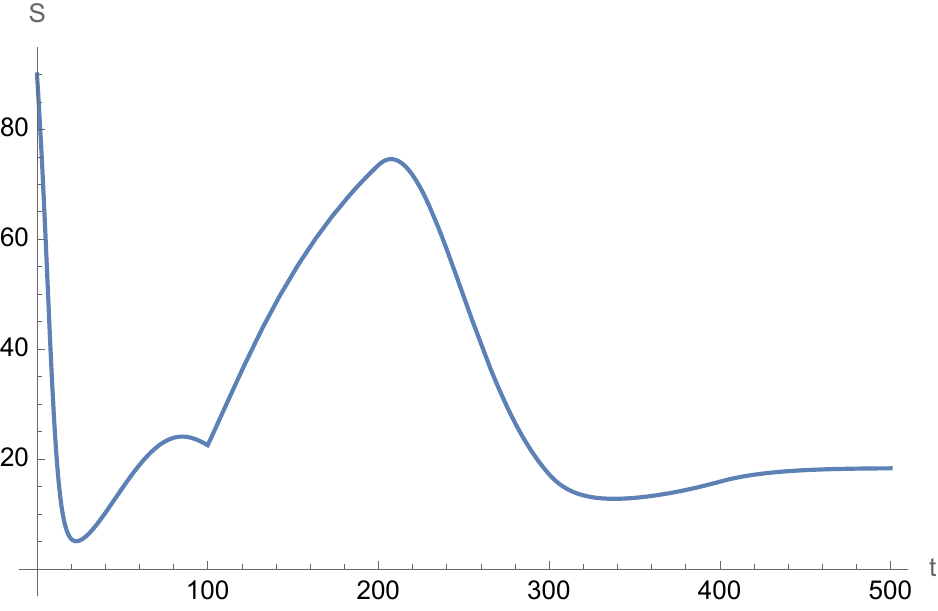}
\includegraphics[width=5.5cm]{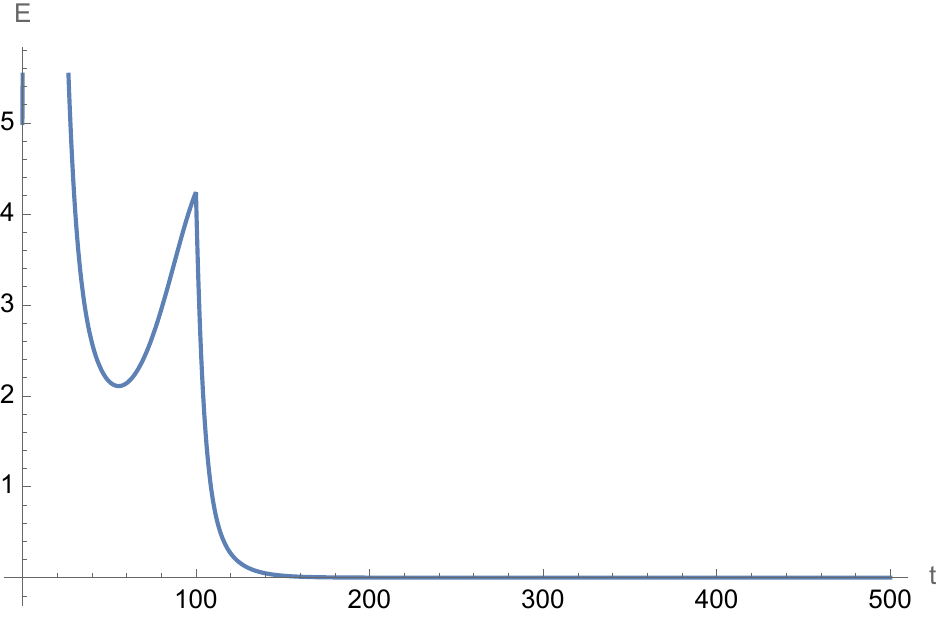}
\includegraphics[width=5.5cm]{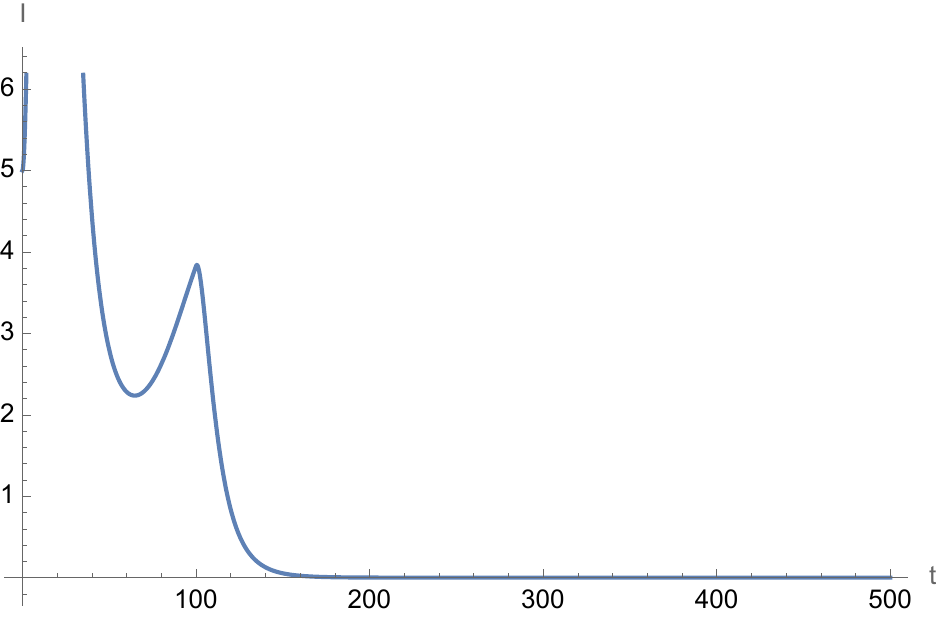}
\includegraphics[width=5.5cm]{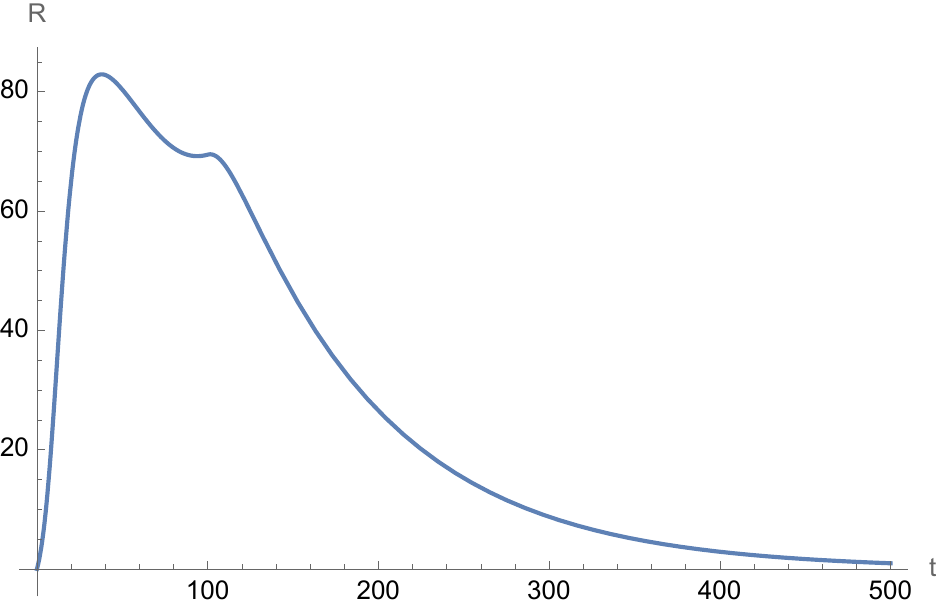}
\includegraphics[width=5.5cm]{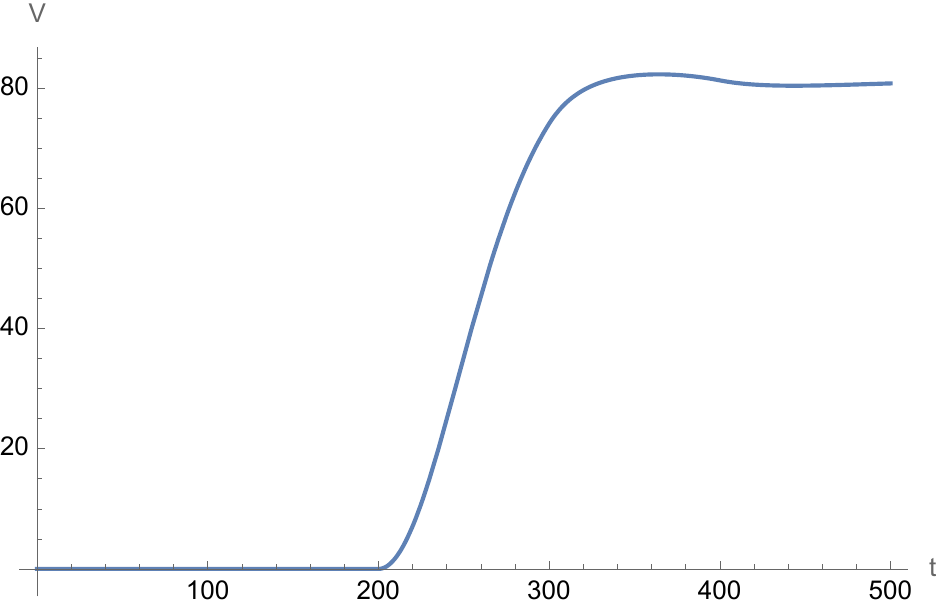}
\caption{The values of $\mathfrak R_0$ and all five states $(S, E, I, R, V)$ plotted over time, for the controls in Figure \ref{fig: controls}. The behavior is very similar to what happened for instance in the State of Hawaii in the first two years of the pandemic, the daily cases (here the I compartment) saw an initial surge followed by a rapid decrease and a new surge countered eventually by vaccination which resulted in plateauing provided that no new mutations took place \cite{Chyba}.}
\label{fig: states}
\end{figure}

In Figure \ref{fig: new states and controls} we display the same trajectory but in the new coordinates from the feedback transformation $\Phi$ from Section \ref{subsec: SFL}.  We show the states $z_0, z_1, z_2, w_0$ and the controls $v_1, v_2$ over time.   

\begin{figure}[H]
\centering
\includegraphics[width=5.5cm]{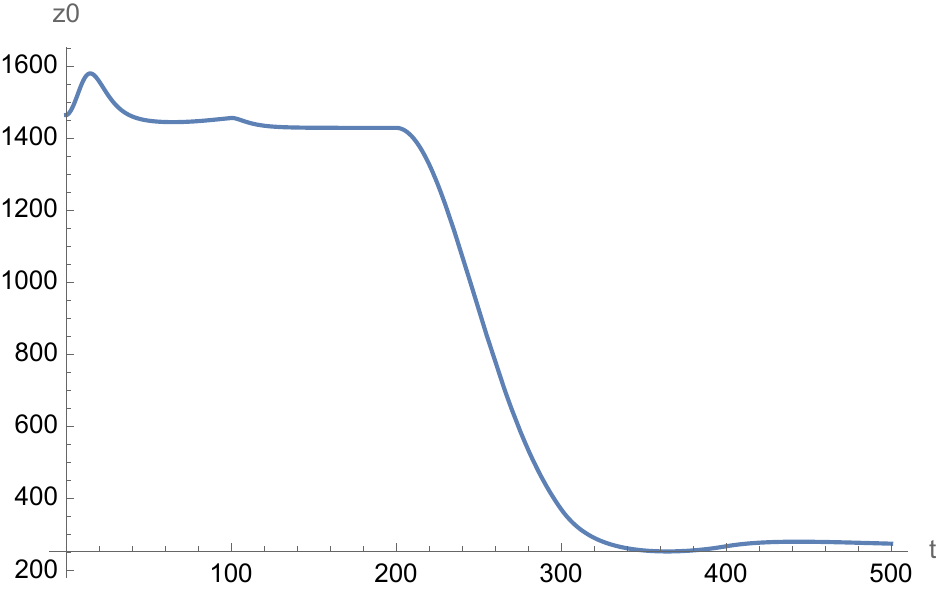}
\includegraphics[width=5.5cm]{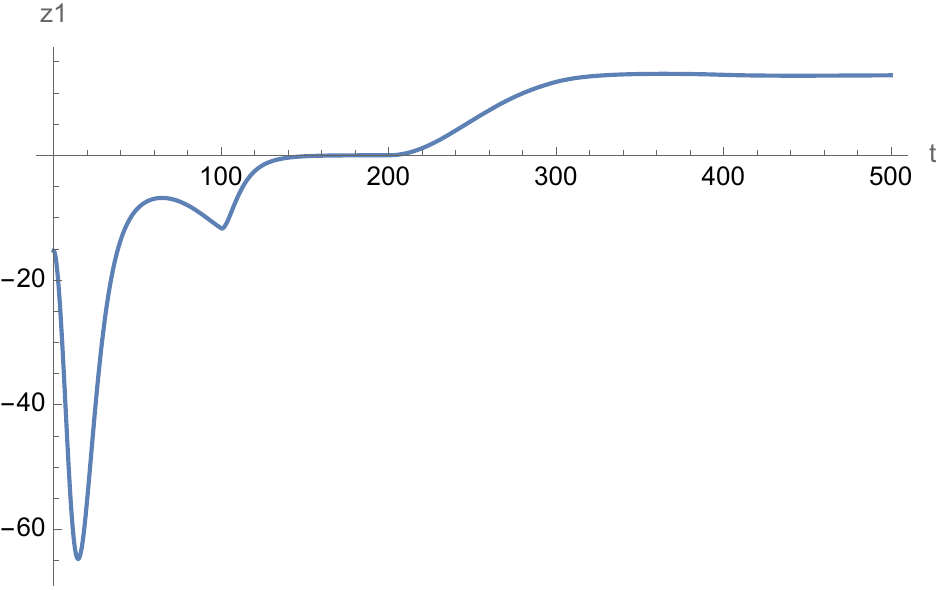}
\includegraphics[width=5.5cm]{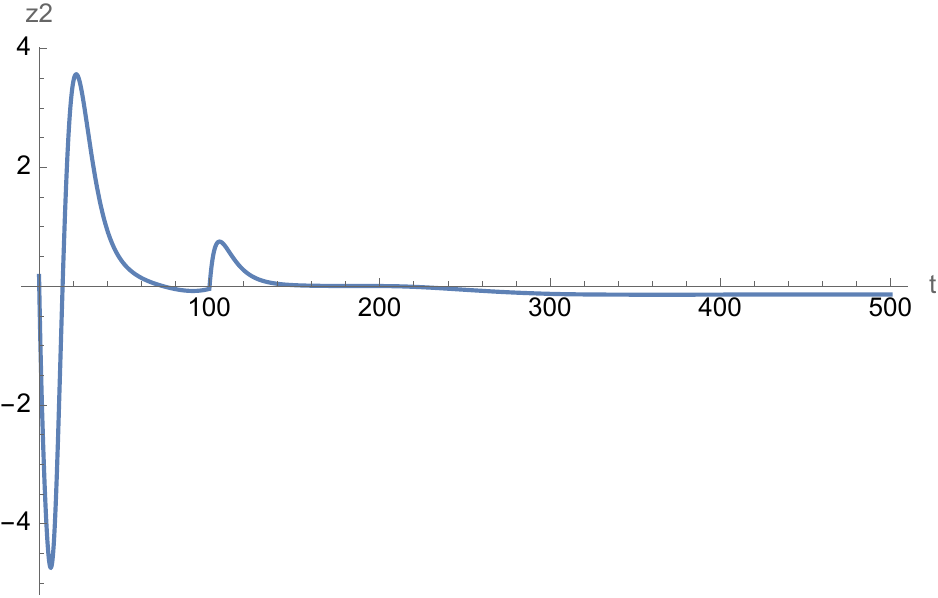}
\includegraphics[width=5.5cm]{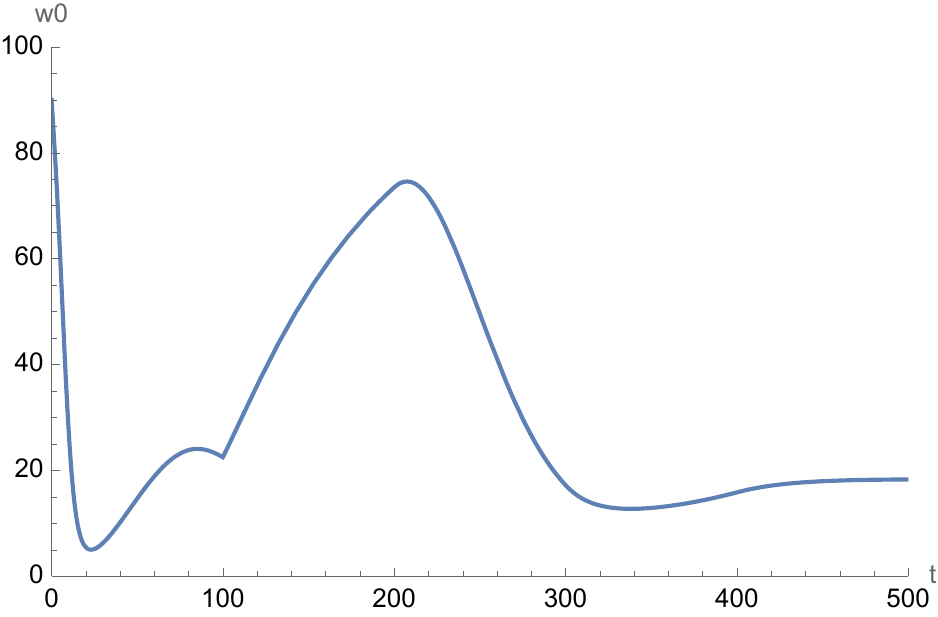}
\includegraphics[width=5.5cm]{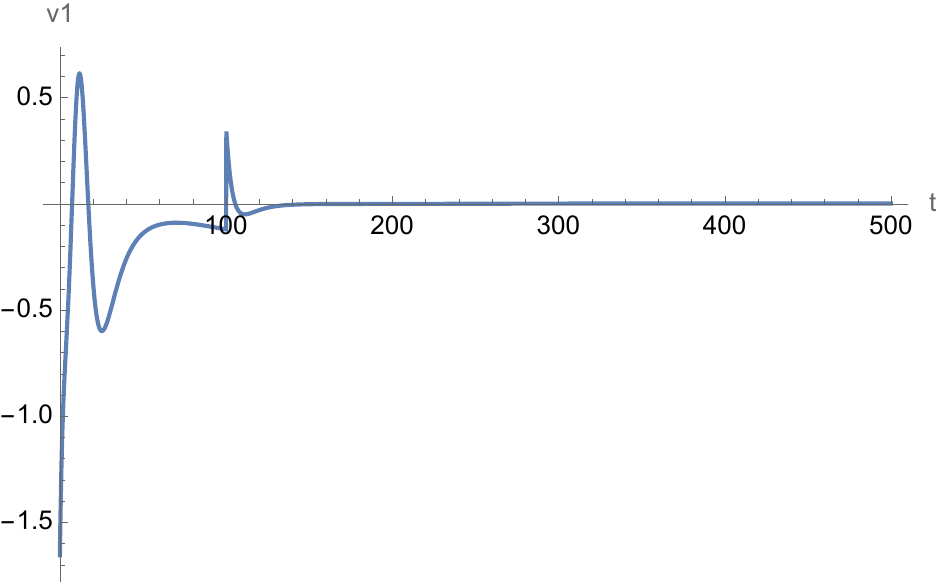}
\includegraphics[width=5.5cm]{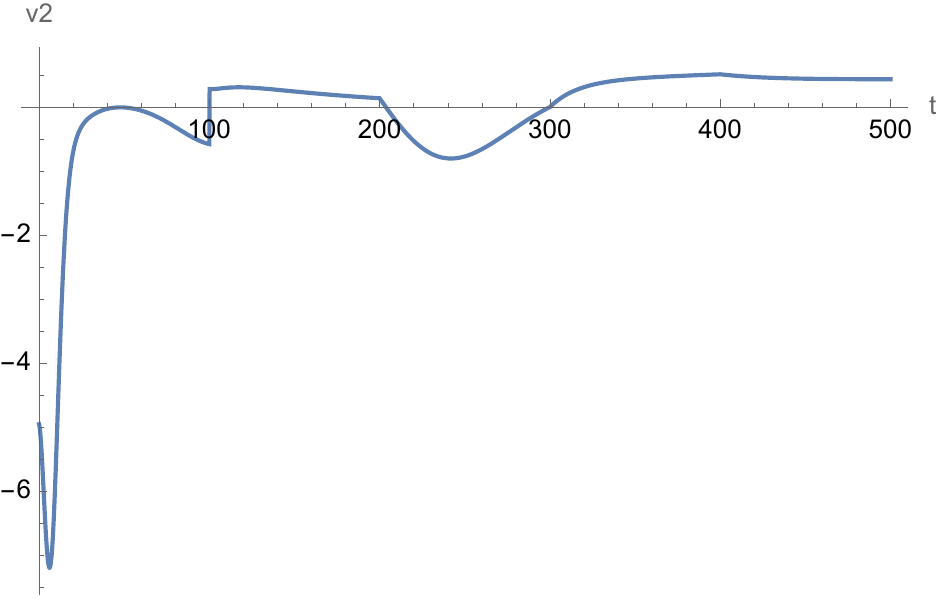}
\caption{The values of the four new states $z_0, z_1, z_2, w_0$ and two new controls $v_1, v_2$ over time, for the trajectories in Figures \ref{fig: controls} and \ref{fig: states}.}
\label{fig: new states and controls}
\end{figure}

Finally, in Figure \ref{fig: states} we can see that the state variables are trending toward the point $p \approx (17, 0.5, 0.5, 10, 71)$.  This point is the endemic equilibrium corresponding to the fixed parameters and final values of the controls $\beta = 0.8$ and $\phi = 0.025$.  In general this is expected: if the controls tend towards constant values then the states will tend toward the endemic equilibrium corresponding to these constant values (as long as $\mathfrak R_0>1$).  Thus in this scenario governments can control the shapes of the curves to potentially avoid overcrowding of hospitals or exhaustion of resources, but the long term trajectory of the epidemic depends only on the limiting values of the controls as $t \to \infty$.  In Table \ref{table} we display some values of the endemic equilibrium for particular choices of these values.  

\begin{table}[ht]
    \centering
    \begin{tabular}{c|c | c |c |c}
             & $\phi=0.0125$ & $\phi=0.025$ & $\phi=0.05$ & $\phi=0.1$\\ \hline
        $\beta=3$   & (6, 4, 4, 82, 4) & (6, 4, 4, 79, 8) & (5, 4, 4, 73, 14) & (4, 3, 3, 65, 24) \\ \hline
        $\beta=1.5$ & (12, 4, 4, 68, 13) & (11, 3, 3, 59, 25) & (8, 2, 2, 41, 46)& (5, 1, 1, 16, 77) \\ \hline
        $\beta=0.8$ & (21, 2, 2, 40, 35) & (17, .5, .5, 10, 71) & none & none
    \end{tabular}
    \caption{Approximate endemic equilibria $(S, E, I, R, V)$ for some values of $\beta$ and $\phi$.}
    \label{table}
\end{table}

\subsection{Optimal control}\label{subsec: optimal}
In this section we initiate a discussion of the optimal control problem of bringing the population to a desired state, such as decreasing the number of infections to an acceptable level as quickly as possible. We limit ourselves to showing the nonexistence of singular trajectories. This is an important result since, when they exist, singular extremals can play a very important role in the optimal synthesis. As an example, the sub-Riemannian Martinet distribution demonstrates that the sphere is not sub-analytic due to the existence of singular extremals \cite{Martinet}. A direct consequence of the nonexistence of singular extremals is that optimal controls must belong to the boundary of the control domain. 

To prove that there are no singular trajectories for the original bi-input system (\ref{SEIV-Control}), we use the fact that singular trajectories are feedback invariant and that they do not exist for the Brunovsk\'y normal form. The relationship between feedback classification and singular extremals was initially discussed in \cite{bonnard-1991}, we will here use the results presented in Chapter 4 of \cite{chyba-book}. 

In this paper, we consider an affine bi-input system of the form
\begin{equation}
\label{bi-inputpmp}
    \dot {q}(t)=F_0({q}(t))+u_1(t)F_1({q}(t))+u_2(t)F_2({q}(t))
\end{equation}
where $q(t)\in \mathbb R^4$. For our application the original vector fields $F_i$ are given in equation (\ref{SEIV-Control}) and as stated the control represents respectively the vaccination rate $u_1$ and the transmission rate $u_2$. The ones for the static feedback linear equivalent system can be found in Theorem \ref{thm-SFL} (given by $F_0=A$, $F_i=b_i, i=1,2$).
\ignore{
\begin{equation*}
\label{opt-singleinput}
    \dot {q}(t)=F_0({q}(t))+u_1(t)F_1({q}(t))
\end{equation*}
where
\begin{equation}\label{SEIV-Control-singleinput}
F_0(q) = \begin{pmatrix}
 \omega (n-S-E-I-V)-\beta S (I +\alpha E)/n  + \psi V\\
 - (\sigma + \delta) E +\beta (S + \rho V)(I +\alpha E)/n \\
 \sigma E - \gamma I \\
  -  \psi V -\beta\rho V(I +\alpha E)/n
\end{pmatrix}
\end{equation}
is the drift, the control vector field is given by 
\begin{equation*}
F_1(q) =\begin{pmatrix}
-S \\ 0 \\ 0 \\ S
\end{pmatrix}
\end{equation*}
and the control $u_1$ belongs to $[0,\phi_{\max}]$ where $\phi_{\max}$ represents the maximal vaccination rate that can be administered (bounded to supply and logistical constraints). }

We first define singular extremals, and then introduce the result that singular extremals are feedback invariants.
\subsubsection{Maximum Principle and Singular Extremals}\label{subsec: MP and singular}
A singular trajectory is defined as a singularity of the end-point mapping $E^{q_0,T}:u(.)\rightarrow q(q_0,T,u)$ where $q_0,T$ are fixed (\cite{chyba-book}). In particular, if $u$ is singular we have that $q(q_0,T,u)$ belong to the boundary of the accessibility set $\displaystyle{A(q_0,T)=\cup_{u(.)}q(q_0,T,u)}$. The following proposition provides a characterization of singular extremals using the weak maximum principle. 

Associated to system (\ref{bi-inputpmp}), we introduce the Hamiltonian function $H$ by:
\begin{equation}
    \displaystyle H(q,p,u)=\langle p,F_0(q)\rangle+\langle p,F_1(q)\rangle u_1+\langle p,F_2(q)\rangle u_2
\end{equation}
where $p:[0,T]\rightarrow \mathbb R^4$.  
\begin{proposition}[\cite{chyba-book}]
    Singular extremals $(q(t),p(t),u(t))$ are solutions of
    \begin{align*}
\label{Hamiltonian}
    \dot q(t)=\frac{\partial H}{\partial p}(q(t),p(t),u(t)),\qquad \dot p(t)&=-\frac{\partial H}{\partial q}(q(t),p(t),u(t)),\\
     \frac{\partial H}{\partial u}(q(t),p(t),u(t))&=0,
\end{align*}
almost everywhere, where $p:[0,T]\rightarrow \mathbb R^4$, $p\neq 0$ for all $t$ is called the adjoint vector.
\end{proposition}
The last equation is equivalent to stating that singular trajectories satisfy for all $t$ the constraint:
\begin{equation*}
\langle p(t), F_i(q(t))\rangle =0, \qquad i=1,2.
\end{equation*}
This constraint defines a subset $\Sigma_1$ of the space $\mathbb R^4\times \mathbb R^4\backslash \{0\}$. In \cite{bonnard-1991}, the author discusses the relation between singular extremals and the feedback classification problem for nonlinear systems; one the main results shows that singular extremals are feedback invariant. 
\begin{proposition}
For the two inputs SVE(R)IRS model (\ref{SEIV-Control}), singular extremals do not exist.   
\end{proposition}
\begin{proof}
    Using Theorem \ref{thm-SFL} and the linearizing map $\Phi$, we have shown that our system (\ref{SEIV-Control}) is feedback equivalent to the Brunovsk\'y normal form. The latter is a linear controllable system and therefore has no singular extremals. We conclude using Theorem 13, Chapter 4 in \cite{chyba-book}.
\end{proof}
Note that for the time optimal problem, this means that the control $u=(u_1,u_2)$ will take its value on the boundary of the domain of control $\mathcal{U}=[0,\phi_{max}]\times[\beta_{min},\beta_{max}]$.

Deriving the time-optimal synthesis is non-trivial due to the complexity of the image of the domain of control $\mathcal{U}$ under the map $\Phi$; see Proposition \ref{v bounds prop}. It could however be implemented numerically since the switching functions for the time-optimal problem associated to the Brunovsk\'y normal form with two inputs are simple. Moreover, it seems that other costs would be more interesting to study. 
\ignore{
Assume that there exists an admissible time-optimal
control $u_1:[0,T]\rightarrow [0,\phi_{\max}]$, such that the corresponding trajectory $q(t)$ is a solution of equations (\ref{bi-inputpmp}) and steers the system from $q(0)$ to $q(T)$. For the minimum time problem, the Maximum Principle, see \cite{chyba-book}, implies that there
exists an absolutely continuous vector $p:[0,T]\rightarrow \mathbb R^4$, $p\neq 0$ for
all $t$, such that the following conditions hold almost everywhere:
\begin{equation}
\label{Hamiltonian}
    \dot q(t)=\frac{\partial H}{\partial p}(q(t),p(t),u(t)),\qquad \dot p(t)=-\frac{\partial H}{\partial q}(q(t),p(t),u(t))
\end{equation}
where the Hamiltonian function $H$ is given by $\displaystyle H(q,p,u)=\langle p,F_0(q)\rangle+\langle p,F_1(q)\rangle u_1.$
Furthermore, the maximum condition holds:
\begin{equation}
\label{maxcond}
    H(q(t),p(t),u(t))=\max_{u\in \mathcal U}H(q(t),p(t),u).
\end{equation}
where $\mathcal U$ is the domain of control. It can be shown that $H(q(.),p(.),u(.))$ is constant along the solutions of (\ref{Hamiltonian}) and is greater or equal to 0. A triple $(q,p,u)$ which satisfies the Maximum Principle is called an extremal, and the vector function $p(.)$ is called the adjoint vector. 

For our system (\ref{SEIV-Control}), given that the domain of control is of the form $\mathcal{U}=[0,\phi_{max}]\times[\beta_{min},\beta_{max}]$, where $0< \beta_{min}<\beta_{max}\leq 1$ and $\phi_{max} \leq 1$, the maximization condition implies that time-optimal strategies are formed by a concatenation of bang and singular arcs. 

Following notations in \cite{chyba-book}, singular trajectories satisfy for all $t$ the constraint
\begin{equation*}
\langle p(t), F_i(q(t))\rangle =0, \qquad i=1,2.
\end{equation*}
This constraint defines a subset $\Sigma_1$ of the space $\mathbb R^4\times \mathbb R^4\backslash \{0\}$. In \cite{bonnard-1991}, B. Bonnard discusses the relation between singular extremals and the feedback classification problem for nonlinear systems; one the main results shows that singular extremals are feedback invariant. Using Theorem \ref{thm-SFL} and the linearizing map $\Phi$, we have shown that our system (\ref{SEIV-Control}) is feedback equivalent to the Brunovsk\'y normal form. The latter is a linear controllable system and therefore has no singular extremals. 
\begin{proposition}
For system (\ref{SEIV-Control}) with the domain of control given by $\mathcal{U}$ as above, the time-optimal control strategies are bang-bang {\color{red} change this}. 
\end{proposition}}
\section{Discussion and Open Questions}\label{sec: discussion}
This paper touched upon two important aspects related to the evolution of a pandemic. First, the design and implementation of efficient strategies to control diseases spread in a population, which, as we saw during the COVID-19 pandemic, is of paramount importance. Second, understanding evolutionary scenarios toward a ``new normal", which is key for decision and policy makers to take proper actions at the correct time and avoid a prolonged stay in the pandemic phase. 

We discuss the above two aspects in more detail below, but we should also mention that this work introduced a large number of open questions as well as directions for generalization. The most obvious question is whether the endemic threshold property in Theorem \ref{thm: main thm} holds for the SVE(R)IRS model of Section \ref{sec: new SVE(R)IRS}.  %It may be that one, both, or neither of the analogues of Lemmas \ref{lemma: new endemic relevance} and \ref{lemma: new endemic stability} hold for SVE(R)IRS.  In fact, in the SVE(R)IRS model both the existence and the uniqueness of the endemic equilibrium remains to be established in the $\mathfrak R_0>1$ case. %Another question is whether we can remove condition (\ref{ineq}) from Lemma \ref{lemma: new disease free}.  
All the examples we have explored suggest that the answer to this questions is affirmative, but with so many free parameters an exhaustive search for counterexamples is challenging. A different set of open questions concerns generalizations of both our models and our results.  In particular, a natural addition to either model would be the vital dynamics of birth and death rates, as is common in the SEIRS literature.  %We suspect that this would not qualitatively affect our results (especially in the fixed population case), but made no attempts at studying these problems.  
Other generalizations could include nonlinear transmission or seasonal forcing, as well as the appearance of new variants which would lead to having coefficients such as $\beta$ depend on time.  An alternative research direction would be the study of global rather than local stability of equilibria. Global stability for classical compartmental models has an extensive literature; see the comprehensive review article \cite{Hethcote}.  In particular, global stability was treated in \cite{Li2} and \cite{Liu} for SEIR, and in \cite{Cheng} and \cite{Li1} for SEIRS.  The methods of these and related papers might also prove effective for our models.  

\subsubsection*{Herd immunity versus endemic equilibrium}%\label{subsec: herd}
In addition to our perceived need to consider compartmental models more closely adapted to COVID-19 than the classical models, this work was motivated in part by our observation that the national dialogue concerning the post-pandemic future focused largely on the concept of herd immunity rather than that of endemic equilibria.  According to \cite{Hethcote}, one has herd immunity when the immune fraction of the population exceeds $1-\frac{1}{\mathfrak R_0}$.  % {\color{red} I do not understand that. The values for $1-\frac{1}{\mathfrak R_0}$ are really small and as long as we have one person in $R$ we are good?}.  
In this case, the disease ``does not invade the population". However, for classical models as well as the models presented here, we know that as long as $\mathfrak R_0>1$ we still do not reach a disease-free equilibrium which would correspond to the complete eradication of the disease. To illustrate the shortcomings of the herd immunity concept, reconsider Example \ref{ex: vax endemic} which includes annual vaccinations.  Then we have  $1-1/\mathfrak R_0 \approx 0.69$.  According to \cite{Hethcote}, herd immunity thus occurs if more than 69\% of the populations is immune to the disease.  However, at the endemic equilibrium for these parameters, we have $R \approx 66$ and $V \approx 7$.  Thus 73\% of the population are immune, which is above the required threshold.  But over 6\% of the population are either in the $E$ or $I$ compartments ($\approx 3.4 \%$ each).  This shows that although we have technically achieved herd immunity, a large number of people still suffer from the disease. Ideally, sufficient vaccination could eradicate a disease completely.  By Proposition \ref{prop: new disease free vax}, this could happen if we took $\phi$ large enough to force $\mathfrak R_0<1$, as the dynamics would trend toward the disease-free equilibrium.  However, consider the following parameters, which are realistic for COVID-19:
$(\alpha, \beta, \gamma, \delta, n, \sigma, \omega, \psi, \rho)=
\Big(\frac{1}{10}, \frac{3}{10}, \frac{1}{7}, \frac{1}{14}, 100, \frac{1}{7}, \frac{1}{90}, \frac{1}{180}, \frac{1}{10} \Big)$. Here we have left $\phi$ free as a control.  We compute that $\mathfrak R_0 <1$ if and only if $\phi>1/282$.  Thus, in order to set a trajectory towards disease eradication, the population would need to be re-vaccinated more often than once per year, which seems unlikely as a long-term sustainable scenario.  Therefore for these parameters it seems more realistic that the best we could hope for is an endemic equilibrium with relatively small portion of the population infected at any given time.  With annual vaccinations we would have approximately $0.73\%$ of the population in the $E$ or $I$ compartments.

\subsubsection*{Efficient Control Strategies}
The control theoretic investigation initiated in Section \ref{sec: control} offers many potentially fruitful avenues for research. Proposition \ref{v bounds prop} provides a description of the corresponding domain of control in the Brunovsk\'y normal form, and can be used to analyze the switching functions for the static feedbcack linearization and therefore corresponding time-optimal controls. They can then be mapped back to the original systems via the inverse transformation.  More importantly, there are a large number of alternative cost functions from which one could reformulate the optimal control problem of Section \ref{subsec: optimal}: in particular, one may want to minimize the number of infections, or amount of vaccines administered, or the time taken to reach an endemic equilibrium, all subject to various boundary and endpoint conditions. The idea is to translate these cost functions via the SFL transformation given explicitly in the proof of Proposition \ref{thm-SFL}. The difficulty to solve these new optimal control problems expressed in the Brunovsk\'y normal form is to be determined, and it is expected that numerical techniques will come into play.

\subsection*{Acknowledgments}
The first and third authors were supported by NSF grant 2030789. The first three authors gratefully acknowledge support from the Coronavirus State Fiscal Recovery Funds via the Governor’s Office Hawaii Department of Defense. 
\section*{Appendix: Proof of Proposition \ref{prop: endeq vax}} \label{appendix endeq}
To find endemic equilibria of the SVE(R)IRS system, we need to solve the following system:
\begin{align*}
0 &= -\beta S (I+\alpha E)/n + \omega (n-S-E-I-V) -\phi S + \psi V \\
0 &=  \beta S (I+\alpha E)/n - (\sigma + \delta) E + \rho \beta V(I+\alpha E)/n \\
0 &= \sigma E - \gamma I \\
0 &=- \rho \beta V(I+\alpha E)/n +  \phi S -  \psi V.
\end{align*}
The third of the above equations readily yields $I=(\sigma/\gamma)E$. Plugging this into the rest of the equations and setting $\kappa = \frac{\sigma+\alpha\gamma}{n\gamma}$ we obtain
\begin{align*}
0 &= -\beta\kappa S E + \omega \left(n-S-\left(1+\frac{\sigma}{\gamma}\right)E-V\right) -\phi S + \psi V \\
0 &=  \beta \kappa S E - (\sigma + \delta) E + \rho \beta \kappa V E \\
0 &=- \rho \beta \kappa V E +  \phi S -  \psi V.
\end{align*}
Note that $E=0$ yields the disease-free equilibrium, so we may assume that $E\neq 0$. Adding the bottom two equations to the first one and dividing the second equation by $\beta\kappa E$ we get
\begin{align*}
0 &= \omega \left(n-S-\left(1+\frac{\sigma}{\gamma}\right)E-V\right) - (\sigma+\delta)E \\
0 &=  S - \frac{\sigma + \delta}{\beta\kappa} + \rho V \\
0 &=- \rho \beta \kappa V E +  \phi S -  \psi V.
\end{align*}
From the first two equations we easily find that
\begin{align*}
S &= -c + \rho a E \\
V &= b - a E,
\end{align*}
where
\begin{align*}
a &= \frac{1}{1-\rho}\left(1+\frac{\sigma}{\gamma} + \frac{\sigma + \delta}{\omega}\right) \\
b &= \frac{1}{1-\rho}\left(n - \frac{\sigma + \delta}{\beta\kappa}\right) \\
c &= \frac{1}{1-\rho}\left(\rho n - \frac{\sigma + \delta}{\beta\kappa}\right).
\end{align*}
Using the expressions for $S$ and $V$ in terms of $E$, we obtain the following quadratic equation:
\[
\rho\beta\kappa a E^2 +(\rho\phi a+\psi a - \rho\beta\kappa b) E - (\phi c + \psi b) = 0,
\]
which yields the following two roots
\begin{equation*}
E_{+,-} = \frac{1}{2\rho\beta\kappa a}\Big(
-(\rho\phi a+\psi a - \rho\beta\kappa b) \pm
\sqrt{(\rho\phi a+\psi a - \rho\beta\kappa b)^2+4\rho\beta\kappa a(\phi c + \psi b )}
\Big).
\end{equation*}
Note that
\begin{align*}
\phi c + \psi b &= \frac{\phi}{1-\rho}\left(\rho n - n\frac{\gamma(\sigma+\delta)}{\beta(\sigma+\alpha\gamma)}\right)+
\frac{\psi}{1-\rho}\left(n - n\frac{\gamma(\sigma+\delta)}{\beta(\sigma+\alpha\gamma)}\right)=\\
 &= \frac{n}{1-\rho}\left(\rho\phi+\psi-(\phi+\psi)\frac{\gamma(\sigma+\delta)}{\beta(\sigma+\alpha\gamma)}\right) =
 \frac{n(\rho\phi+\psi)}{1-\rho}\left(1-\frac{1}{\mathfrak{R}_0}\right).
\end{align*}
We thus see that if $\mathfrak{R}_0>1$ then both roots are real, and $E_{+}>0$ while $E_{-}<0$. The latter gives a biologically irrelevant state. Computing the values of $S$ and $V$ from $E_+$ we obtain
\begin{equation*}
V_{+} = b-a E_{+} = \frac{1}{2\rho\beta\kappa}\Big(\rho\beta\kappa b + \rho\phi a+\psi a - 
\sqrt{(\rho\phi a+\psi a - \rho\beta\kappa b)^2+4\rho\beta\kappa a(\phi c + \psi b )}
\Big),
\end{equation*}
\begin{multline*}
S_{+} = -c+\rho a E_{+} = \frac{1}{2\beta\kappa}\Big(-2\beta\kappa c -(\rho\phi a+\psi a -\rho\beta\kappa b) + \\
\sqrt{(\rho\phi a+\psi a - \rho\beta\kappa b)^2+4\rho\beta\kappa a(\phi c + \psi b )}
\Big).
\end{multline*}
Rather than showing directly from these expressions that $S_{+}$ and $V_{+}$ are positive, we proceed as follows. Note that instead of expressing $S$ and $V$ in terms of $E$ we could as easily express any two of these variables in terms of the remaining one. Hence, we can obtain quadratic equations for $S$ and $V$. We don't need the whole equations, but we note that both of them have a positive factor in front of the quadratic term, while the free constant term in the equation for $V$ is $\phi(\rho b -c)$, and in the equation for $S$ it is $-(\rho b -c)(\beta\kappa c/a+\psi)/\rho$. Now, $\rho b-c = \frac{\sigma+\delta}{\beta\kappa}>0$. Thus, the two roots of the equation for $V$ have the same sign. Since $E_{-}<0$ when $\mathfrak{R}_0>1$, we see that one of such roots is $b-a E_{-}>0$, hence we also have $V_{+}=b-a E_{+}>0$. Similarly, we note that if $c\geq 0$ the two roots of the equation for $S$ have opposite signs, and since one of them is $-c+\rho a E_{-}<0$, we have $S_{+}=-c+\rho a E_{+}>0$. The latter inequality is obvious when $c<0$.

Finally noting that $I_{+}=(\sigma/\gamma)E_{+}>0$, we see that having $\mathfrak{R}_0>1$ yields a single biologically relevant endemic equilibrium.

\ignore{
For the SVE(R)IRS system, Mathematica gives two endemic equilibria, $p_2$ and $p_3$, which are square root conjugate to each other. One of them is given by
$$ p_3 = \left(\frac{S_N}{S_D},  \frac{E_N}{E_D}, \frac{I_N}{I_D}, \frac{R_N}{R_D}, \frac{V_N}{V_D}    \right)$$
where 
\begin{align*}
S_N &= \beta^2 \gamma n \rho (\alpha \gamma + \sigma)^2 \omega + 
  \beta \gamma n (\alpha \gamma + \sigma) (\gamma (\psi + \phi \rho) (\delta + \sigma) + (\psi + \phi \rho) \sigma \omega \\%
  &\quad \quad + 
     \gamma (\psi + \delta (-2 + \rho) + \phi \rho + (-2 + \rho) \sigma) \omega) - \sqrt{Y} \\
E_N &= \beta^2 \gamma n \rho (\alpha \gamma + \sigma)^2 \omega + 
  \beta \gamma n (\alpha \gamma + 
     \sigma) (-\gamma (\psi + \phi \rho) (\delta + \sigma) \\%
    &\quad \quad  - ((\psi + \phi \rho) \sigma + 
        \gamma (\psi + \rho (\delta + \phi + \sigma))) \omega) + \sqrt{Y} \\
I_N &=\sigma (\beta \gamma n \sigma^2 (-\gamma (\psi + 
            \phi \rho) - (\psi + (-\beta + \gamma + \phi) \rho) \omega)\\%
            &\quad \quad  + 
      \alpha \beta \gamma^3 n (-\delta (\psi + 
            \phi \rho) - (\psi + (-\alpha \beta + \delta + \phi) \rho) \omega)\\%
        &\quad \quad \quad     + 
      \beta \gamma^2 n \sigma (-((\alpha \gamma + \delta) (\psi + 
              \phi \rho)) \\%
              &\quad \quad \quad \quad  - ((1 + \alpha) \psi + (\delta + \phi + 
               \alpha (-2 \beta + \gamma + \phi)) \rho) \omega) + \sqrt{Y}) \\
R_N &= -((\delta + 
     \sigma) (-\beta^2 \gamma n \rho (\alpha \gamma + \sigma)^2 \omega + 
     \beta \gamma n (\alpha \gamma + 
        \sigma) (\gamma (\psi + \phi \rho) (\delta + \sigma) \\%
        &\quad \quad + (\psi + \phi \rho) \sigma \omega + 
        \gamma (\psi + \rho (\delta + \phi + \sigma)) \omega) - \sqrt{Y})\\
V_N &= -\beta^2 \gamma n \rho (\alpha \gamma + \sigma)^2 \omega - 
  \beta \gamma n (\alpha \gamma + \sigma) (\gamma (\psi + \phi \rho) (\delta + s) + (\psi + \phi \rho) \sigma \omega \\%
  &\quad \quad + 
     \gamma (\psi - \rho (\delta - \phi + \sigma)) \omega) + \sqrt{Y}\\
& \\
S_D &= 2 \beta^2 \gamma (\rho-1) (\alpha \gamma + \sigma)^2 \omega \\
E_D &= 2 \beta^2 \rho (\alpha \gamma + \sigma)^2 (\sigma \omega + 
    \gamma (\delta + \sigma + 
       \omega)) \\
I_D &=2 \beta^2 \gamma \rho (\alpha \gamma + \sigma)^2 (\sigma \omega + \gamma (\delta + \sigma + \omega)) \\
R_D &=  2 \beta^2 \rho (\alpha \gamma + \sigma)^2 \omega (\sigma \omega + 
     \gamma (\delta + \sigma + \omega))\\
V_D &= 2 \beta^2 \gamma (\rho-1) \rho (\alpha \gamma + \sigma)^2 \omega
\end{align*}
with 
\begin{align*}
   Y=&\ \beta^2 \gamma^2 n^2 (\alpha \gamma + 
   \sigma)^2 ((\gamma (\psi + \phi \rho) (\delta + \sigma) \\
   &+ (\psi + (-\beta + \phi) \rho) \sigma w +  \gamma (\psi + \rho (-\alpha \beta + \delta + \phi + \sigma)) w)^2 \\
   &\quad -    4 \rho (-\beta (\psi + \phi \rho) \sigma +  \gamma (\delta (\phi + \psi)  -      \alpha \beta (\psi + \phi \rho) \\
   &\quad \quad + (\phi + \psi) \sigma)) w (\sigma w +   \gamma (\delta + \sigma + w))).
\end{align*}
}

\end{document}